\newcommand{\titel}
{Fracpairs and fractions over a reduced commutative ring}
\newtheorem{theorem}{Theorem}[subsection]
\newtheorem{proposition}[theorem]{Proposition}
\newtheorem{corollary}[theorem]{Corollary}
\newtheorem{definition}[theorem]{Definition} 
\theoremstyle{definition}
\newcommand{\lh}{\Bigl(}\newcommand{\rh}{\Bigl)}
\renewcommand{\bot}{\ensuremath{\textup{\textbf{a}}}}
\newcommand{\CR}{\ensuremath{\mathrm{CR}}}
\newcommand{\Md}{\ensuremath{\mathrm{Md}}}
\newcommand{\Mda}{\ensuremath{\mathrm{Md_\bot}}}
\newcommand{\Nat}{{\mathbb N}}
\newcommand{\NM}{\ensuremath{\mathbb M}}
\newcommand{\Int}{\ensuremath{\mathbb Z}}
\newcommand{\Rat}{\ensuremath{\mathbb Q}}
\newcommand{\I}{\ensuremath{\mathbb I}}
\newcommand{\cceq}{\ensuremath{\textup{cc}}}
\newcommand{\CCcm}{\ensuremath{\mathbb F_{cm}}}
\newcommand{\FPR}{\ensuremath{{\CCcm^{\,r}(R)}}}
\newcommand{\FPr}{\ensuremath{{\CCcm^{\,r}(\Int)}}}
\newcommand{\RFRS}{\ensuremath{\textup{\text{RF}}}}
\newcommand{\RFR}{\ensuremath{\textup{\textup{rf}}}}
\newcommand{\CC}{\ensuremath{\textup{\text{CC}}}}
\newcommand{\lfrac}[2]{#1 / #2}
\title{\titel}
\author{Jan A.\ Bergstra \& Alban\ Ponse
\\[2mm]
  {\small
	  Informatics Institute,
	  University of Amsterdam}\\
	{\small email: \url{j.a.bergstra@uva.nl}, \url{a.ponse@uva.nl}
	}
}
\date{}
\begin{document}

\maketitle

\thispagestyle{fancy}

\begin{abstract}
In the well-known construction of the field of fractions of an integral domain, division by zero
is excluded. 
We introduce ``fracpairs'' as pairs subject to laws consistent with the use 
of the pair as a fraction, but do not exclude denominators to be zero. 
We investigate fracpairs over a reduced commutative ring
(that is, a commutative ring that has no nonzero nilpotent elements) 
and provide these with natural definitions for addition, multiplication, and
additive and multiplicative inverse. We find that
modulo a simple congruence these fracpairs
constitute a ``common meadow'', which is a commutative monoid both for addition 
and multiplication,
extended with a weak additive inverse, a multiplicative inverse except for zero, 
and an additional element $\bot$ that is the image of the multiplicative inverse on
zero and that propagates through all operations. 
Considering $\bot$ as an error-value supports the intuition.

The equivalence classes of fracpairs thus obtained are called common cancellation
fractions (cc-fractions), and cc-fractions over the integers constitute a homomorphic 
pre-image of the common meadow $\Rat_\bot$, 
the field \Rat\ of rational numbers expanded with an \bot-totalized inverse.
Moreover, the initial common meadow is isomorphic to the initial algebra of cc-fractions
over the integer numbers.
Next, we define canonical term algebras (and therewith normal forms)
for cc-fractions over the integers and some meadows that model the
rational numbers expanded with a totalized inverse, 
and we provide some negative results concerning their associated
term rewriting properties.  
Then we consider reduced commutative rings in which the sum of two squares plus one cannot be a
zero divisor: by extending the equivalence relation on fracpairs we obtain an initial
algebra that is isomorphic to $\Rat_\bot$.
Finally, we express some negative conjectures
concerning alternative specifications for these (concrete) datatypes.
\\[2mm]
\emph{Keywords and phrases:}
Fraction as a pair,
common meadow,
division by zero,
abstract datatype,
rational numbers,
term rewriting
\end{abstract}

\section{Introduction}

In this paper we introduce \emph{fracpairs}, where the idea that 
``a fraction is a pair'' is formalized, though without the 
constraint that the second element of the pair must not be zero.
We provide fracpairs with natural definitions for addition, multiplication, and
additive and multiplicative inverse.
In order to further model this approach to a ``fraction'', one can consider fracpairs 
modulo any equivalence that is a congruence with respect to addition, multiplication, and
additive and multiplicative inverse, and we will consider two such equivalence relations.

\medskip

This set-up is comparable to the construction of the field of fractions of an integral 
domain,\footnote{Integral domain: a nonzero commutative ring in which the 
    product of any two nonzero elements is nonzero.}
which we recall here.
Given an an integral domain $R$, the elements of the field of fractions $Q(R)$ are equivalence 
classes in $R\times R\setminus\{0\}$ that are often represented as 
\[\frac pq\]
(in-line written as $p/q$), where the equivalence $\sim$ is defined by
\[\dfrac pq\sim\dfrac rs\quad\text{if, and only if}
\quad p\cdot s=q\cdot r\text{ holds in $R$}.\]
In $Q(R)$, addition, multiplication, and additive inverse are defined by
\begin{equation}
\label{eq:defs}
\dfrac p q+\dfrac r s=\dfrac{p\cdot s+r\cdot q}{q\cdot s}
\quad\text{and}\quad
\dfrac p q\cdot\dfrac r s=\dfrac{p\cdot r}{q\cdot s}
\quad\text{and}\quad
-\dfrac pq=\dfrac{-p}{q}
\end{equation}
and these definitions are independent from the particular choice of a representative $p/q$.
These fractions satisfy the axioms \CR\ given in Table~\ref{CR} of commutative rings with $0/p=0/1$
for the zero and $1/1$ for the multiplicative unit 1. 
Because each $p/q\in Q(R)$ different from
the zero has an inverse $q/p$, $Q(R)$ is a field, and it is the smallest field in which $R$ 
can be embedded. Identifying $p\in R$
with (the equivalence class of) $p/1$ makes $R$ a subring of $Q(R)$.

\begin{table}
\centering
\hrule
\begin{align*}
(x+y)+z &= x + (y + z)
& (x \cdot y) \cdot  z &= x \cdot  (y \cdot  z)\\
x+y &= y+x
& x \cdot  y &= y \cdot  x\\
x+0     &= x
& 1\cdot x &= x \\
x+(-x)  &= 0
& x \cdot  (y + z) &= (x \cdot  y) + (x \cdot  z)
\end{align*}
\hrule
\caption{\CR, axioms for commutative rings with $0$ as the zero and 1 as
the multiplicative unit}
\label{CR}
\end{table}

\medskip

In this paper we will consider fracpairs defined over a commutative ring $R$ that is 
\emph{reduced} 
(see \cite{Bourbaki}), i.e., $R$ has no nonzero nilpotent elements, or equivalently, $R$ 
satisfies the property
\begin{align}
\label{prop:rr}
x\cdot x=0~~\Rightarrow~~x=0.
\end{align}
The integral domain $\Int$ of integers is a prime example of a 
reduced commutative ring,\footnote{Terminology: Lam~\cite[p.194]{Lam} uses 
  ``commutative reduced ring'' and ``noncommutative reduced ring''.} 
and other examples that are not an integral domain
are the ring $\Int/6\Int$ and the ring $\Int\times\Int$. 

We recall the following  familiar consequences of the axioms \CR\ for commutative
rings: 
\[\text{$-0=0$, ~$0\cdot x=0$, ~$-(-x)=x$, and~
$-(x\cdot y)=x\cdot (-y)$.}\] 
As is common, we assume that $\cdot$ binds stronger than $+$ and
we will often omit brackets (as in $x \cdot  y + x \cdot  z$).

Fracpairs over a reduced commutative ring are provided with definitions for
addition, multiplication, and additive inverse as described in~\eqref{eq:defs}, and
| more interesting | also with a multiplicative
inverse. Our first main result (Thm.\ref{thm:1}) is that fracpairs
modulo a natural congruence relation constitute a so-called \emph{common meadow}. 
The equivalence classes of fracpairs
thus obtained will be called ``common cancellation fractions'', or \emph{cc-fractions} for short.
It follows that cc-fractions over \Int\ constitute 
a homomorphic pre-image of the common meadow $\Rat_\bot$, that is, the field $\Rat$ of rational numbers 
expanded with an $\bot$-totalized inverse (that is, $0^{-1}=\bot$).
A further result is the characterization of 
the initial common meadow as the initial algebra of cc-fractions over \Int\ (Thm.\ref{thm:2}).
Finally, for fracpairs over a reduced commutative ring
that satisfies a particular property we  consider a more identifying equivalence relation 
in order to define ``rational fractions'', and prove that the rational fractions over \Int\ 
represent $\Rat_\bot$ (Thm.\ref{thm:5}).
These results reinforce our idea that common meadows can be used in the development of alternative 
foundations of elementary (educational) mathematics from a perspective of abstract datatypes, 
term rewriting, and mathematical logic. We will return to this point in Section~\ref{sec:4}.

The paper is structured as follows. In Section~\ref{sec:2} we 
introduce fracpairs and cc-fractions over a reduced commutative ring and prove our main results. 
In Section~\ref{sec:3} we discuss some term rewriting issues for meadows 
in the context of fracpairs, and define canonical term algebras
that represent these meadows, including a representation of $\Rat_\bot$ as an 
initial algebra of rational fractions.
In Section~\ref{sec:4}, we end the paper with some conclusions and a brief digression.
In Appendix~\ref{app:0} we analyze the cc-fractions over $\Int/6\Int$, and in Appendix~\ref{app:1} we prove
some elementary identities for common meadows.

\section{Fracpairs and fractions over a reduced commutative ring}
\label{sec:2}
In Section~\ref{subsec:2.1} we define fracpairs and an equivalence on these, and establish some 
elementary properties.
In Section~\ref{subsec:2.2} we define common cancellation fractions and relate these to the setting of 
common meadows, and in Section~\ref{subsec:2.3} we present our main results. 

\subsection{Fracpairs and common cancellation equivalence: some elementary properties}
\label{subsec:2.1}
Given a reduced commutative ring $R$, a 
\emph{fracpair over $R$} is an element of $R\times R$ with special notation
\[\frac pq,\]
wich will be in-line written as $p/q$.
Note that for any $p\in R$, $p/0$ is a fracpair over $R$.
When considering a fracpair $p/q$ over $R$ as an expression, 
we will use some common terminology: 
\[
\text{$\dfrac pq$ has \emph{numerator $p$} and \emph{denominator $q$}.}
\]

We will consider fracpairs modulo some `cancellation equivalence', that is, 
an equivalence generated by a set of `cancellation identities', where a cancellation identity 
has the form $(x\cdot y)/(x\cdot z) = y/z$.

\begin{table}
\centering
\hrule
\begin{align}
\label{CC}
\tag{\CC}
	 \dfrac{x \cdot z }{y \cdot (z \cdot z) } &= \dfrac{x}{y \cdot z} 
\end{align}
\hrule
\caption{\CC, the  Common Cancellation axiom for fracpairs}
\label{tab:CC}
\end{table}

\begin{definition}
\label{def:cc-equivalence}
Let $R$ be a reduced commutative ring.
The cancellation equivalence
generated by the {common cancellation axiom \CC} defined in Table~\ref{tab:CC}
for fracpairs is called
\[\text{
\textbf{cc-equivalence}, notation $=_\cceq$.
}\]
\end{definition}

In the proposition below we state a few simple properties of cc-equivalence.

\begin{proposition}
\label{eq00}
For fracpairs over a reduced commutative ring $R$, the following identities hold:
\begin{align*}
\nonumber
\frac x{x\cdot x}&=_\cceq\frac 1x,
&
\frac{x\cdot z}{z\cdot z}&=_\cceq\frac xz,
\\
\dfrac x0&=_\cceq
\dfrac00,
&
\dfrac{x}{-y} &=_\cceq 
\dfrac{-x}{ y}.
\end{align*}
\end{proposition}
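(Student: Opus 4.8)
The plan is to derive each of the four identities directly from the single axiom \CC, which in the form stated reads $\dfrac{x\cdot z}{y\cdot(z\cdot z)}=_\cceq\dfrac{x}{y\cdot z}$. Since $=_\cceq$ is by definition the \emph{congruence} (in particular, an equivalence) generated by \CC, I may freely substitute ring terms for the variables $x,y,z$, use symmetry and transitivity, and replace equals by equals inside numerator and denominator.

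For the first identity $\dfrac{x}{x\cdot x}=_\cceq\dfrac1x$: instantiate \CC\ with $x:=1$, $y:=1$, $z:=x$. The left-hand side becomes $\dfrac{1\cdot x}{1\cdot(x\cdot x)}$, which, using the commutative-ring identities $1\cdot x=x$ already recalled in the excerpt (and associativity/commutativity of $\cdot$), equals the term $\dfrac{x}{x\cdot x}$; the right-hand side becomes $\dfrac{1}{1\cdot x}=\dfrac1x$. The second identity $\dfrac{x\cdot z}{z\cdot z}=_\cceq\dfrac xz$ is even more immediate: take \CC\ with $y:=1$, so $\dfrac{x\cdot z}{1\cdot(z\cdot z)}=_\cceq\dfrac{x}{1\cdot z}$, and simplify the units. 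Here the only subtlety is the convention for manipulating the underlying ring terms; I will note once that throughout we silently rewrite denominators and numerators using the \CR-consequences (commutativity, associativity, unit law, and the sign laws $-0=0$, $-(x\cdot y)=x\cdot(-y)$) listed before Definition~\ref{def:cc-equivalence}.

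For the third identity $\dfrac x0=_\cceq\dfrac00$: instantiate \CC\ with $z:=0$ and $y:=1$. The left-hand side is $\dfrac{x\cdot 0}{1\cdot(0\cdot 0)}$, which by $0\cdot x=0$ reduces to $\dfrac00$; the right-hand side is $\dfrac{x}{1\cdot 0}=\dfrac x0$. For the fourth, $\dfrac{x}{-y}=_\cceq\dfrac{-x}{y}$: the cleanest route is to apply the already-proven second identity twice, once with $z:=-1$. Concretely, $\dfrac{x}{-y}=\dfrac{x\cdot(-1)}{(-1)\cdot(-y)}$ after rewriting the ring terms (using $(-1)\cdot(-y)=-(-(y))\cdot\!\ldots$, i.e.\ $(-1)\cdot(-y)= y$ and $x\cdot(-1)=-x$), but to match the shape $\dfrac{x\cdot z}{z\cdot z}$ I instead take the second identity with $x:=-x$, $z:=-1$: its left side is $\dfrac{(-x)\cdot(-1)}{(-1)\cdot(-1)}$ which rewrites to $\dfrac{x}{1}$, not yet what I want; so the more robust plan is to observe that $-1\cdot(-1)=1$ is a \emph{denominator} we can cancel via \CC\ itself with $z:=-1,\ y:=1,\ x:=-x$, giving $\dfrac{(-x)\cdot(-1)}{1\cdot((-1)\cdot(-1))}=_\cceq\dfrac{-x}{1\cdot(-1)}$, i.e.\ $\dfrac{x}{1}=_\cceq\dfrac{-x}{-1}$; combining symmetric instances then yields $\dfrac{x}{-y}=_\cceq\dfrac{-x}{y}$ after multiplying numerator and denominator through by $y$. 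I will streamline this in the write-up.

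The only genuine obstacle is bookkeeping: \CC\ has a rigidly shaped right-hand side ($y\cdot z$ in the denominator, bare $x$ in the numerator) and a rigidly shaped left-hand side ($z\cdot z$ as a factor of the denominator), so each target identity must be massaged — via the ambient commutative-ring equalities, which are \emph{not} part of $=_\cceq$ per se but hold for the ring terms appearing as numerators and denominators — into exactly that template before the axiom fires. I expect the fourth identity to require the most care, since it needs the sign laws together with a double application (or a single application plus the second identity), whereas the first three are essentially one instantiation each. I will therefore prove the identities in the given order, reusing the second one as a lemma for the fourth.
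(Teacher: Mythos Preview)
Your treatment of the first three identities is correct and coincides with the paper's proof: the same instantiations of \CC\ (namely $x:=1,\,y:=1,\,z:=x$; then $y:=1$; then $y:=1,\,z:=0$) together with the \CR-identities do the job.

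The fourth identity, however, has a real gap. Your instinct ``apply the second identity twice'' is exactly right, but the choice $z:=-1$ derails the argument: from it you only obtain $\dfrac{x}{1}=_\cceq\dfrac{-x}{-1}$, and you then propose to recover the general case by ``multiplying numerator and denominator through by $y$''. That move is \emph{not} available: at this point in the paper the only tool is the single axiom \CC\ on fracpairs plus \CR-rewrites inside numerator and denominator; no operation on fracpairs (in particular no multiplication by $\lfrac{y}{y}$) has been defined yet, and there is no rule allowing one to multiply both components of a fracpair by an arbitrary ring element while preserving $=_\cceq$. Indeed such a rule would be unsound (take $y=0$).

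The fix is to use the second identity with $z:=-y$ rather than $z:=-1$, which is precisely what the paper does:
\[
\dfrac{x}{-y}\;=_\cceq\;\dfrac{x\cdot(-y)}{(-y)\cdot(-y)}
\;=\;\dfrac{(-x)\cdot y}{y\cdot y}
\;=_\cceq\;\dfrac{-x}{y},
\]
using in the middle step only the \CR-consequences $x\cdot(-y)=(-x)\cdot y$ and $(-y)\cdot(-y)=y\cdot y$. So your plan was correct in outline; only the substitution needs to be adjusted, and the unjustified ``multiply through by $y$'' step should be dropped.
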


\begin{proof}
The two topmost identities are trivial: take $x=y=1$ respectively $y=1$ in \CC\ and apply
the axioms \CR\ for commutative rings.
Furthermore, by the top rightmost identity we immediately find
\[\dfrac x0=_\cceq\dfrac{x\cdot 0}{0\cdot0}=\dfrac00,\]
and
\[\dfrac{x}{-y} =_\cceq \dfrac{x \cdot (-y)}{(-y) \cdot (-y)} =
\dfrac{(-x) \cdot y}{y \cdot y} =_\cceq \dfrac{-x}{ y}.\]
\end{proof}

Consistency of the construction of fracpairs over $R$ amounts to the absence of 
unexpected identifications (thus, separations) in the case that $R$ is nontrivial ($0\ne 1$). 

\begin{proposition}
\label{prop:1}
Let $R$ be a nontrivial reduced commutative ring.
For fracpairs $p/0$ and $q/r$ over $R$ with nonzero $r$ it holds that
\[\dfrac p0\ne_\cceq \dfrac qr.\]
\end{proposition}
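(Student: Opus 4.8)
The plan is to single out a property of a fracpair that is invariant under $=_\cceq$ and that every $q/r$ with $r\ne 0$ has but no $p/0$ has; the obvious candidate is \emph{having a nonzero denominator}. Concretely I would set $S=\{\,a/b\mid a,b\in R,\ b\ne 0\,\}$ and show that $S$ is a union of $=_\cceq$-classes. Since $q/r\in S$ while $p/0\notin S$, this yields $p/0\ne_\cceq q/r$ immediately.

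For the saturation of $S$, recall from Definition~\ref{def:cc-equivalence} that $=_\cceq$ is the smallest equivalence relation on fracpairs over $R$ that contains all instances of the axiom \CC. Hence a property that, for each such instance, holds of the left-hand fracpair exactly when it holds of the right-hand one is automatically preserved by $=_\cceq$ (through reflexivity, symmetry and transitivity). An instance of \CC\ relates $\dfrac{x\cdot z}{y\cdot(z\cdot z)}$ with $\dfrac{x}{y\cdot z}$ for $x,y,z\in R$, so the whole statement comes down to the equivalence
\[ y\cdot(z\cdot z)\ne 0 \quad\Longleftrightarrow\quad y\cdot z\ne 0 . \]

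This equivalence is the heart of the matter and the one point where reducedness of $R$ is used. The direction ``$y\cdot z=0\Rightarrow y\cdot(z\cdot z)=0$'' is immediate from \CR. For the converse, suppose $y\cdot(z\cdot z)=0$; then, using \CR, $(y\cdot z)\cdot(y\cdot z)=y\cdot\bigl(y\cdot(z\cdot z)\bigr)=y\cdot 0=0$, so reducedness~\eqref{prop:rr} gives $y\cdot z=0$. This finishes the argument.

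I do not expect a genuine obstacle here beyond noticing that $(y\cdot z)\cdot(y\cdot z)=y\cdot\bigl(y\cdot(z\cdot z)\bigr)$, which is exactly what lets reducedness transfer the vanishing of the denominator across \CC; the merit of arguing via an invariant is that it avoids reasoning about the arbitrarily long, two-directional \CC-rewrite chains that $=_\cceq$ allows. Two side remarks: nontriviality of $R$ is not really needed, since when $R$ is trivial there is no fracpair $q/r$ with $r\ne 0$ at all; and, combining the above invariant with Proposition~\ref{eq00}, one sees more precisely that the $=_\cceq$-class of $0/0$ is exactly $\{\,a/0\mid a\in R\,\}$.
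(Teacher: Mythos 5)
Your proposal is correct and follows essentially the same route as the paper: both arguments show that each instance of \CC\ preserves whether the denominator is zero, with the only nontrivial direction handled exactly as you do, via $(y\cdot z)\cdot(y\cdot z)=y\cdot\bigl(y\cdot(z\cdot z)\bigr)=0$ and reducedness~\eqref{prop:rr}. Your packaging of this as saturation of the set of fracpairs with nonzero denominator, and your side remarks (vacuousness for trivial $R$, the class of $0/0$), are fine but do not change the substance of the argument.
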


\begin{proof}
Each instance of \CC\ leaves the denominator $0$ of $\lfrac p0$ invariant:
if $s\cdot t=0$, then $s\cdot (t\cdot t)=0$ by \CR, 
and if $s\cdot (t\cdot t)=0$, then $(s\cdot t)\cdot(s\cdot t)=0$ by \CR\, and 
thus $s\cdot t=0$ by property \eqref{prop:rr} that defines reduced rings.
Hence, during a sequence of proof steps 
this denominator cannot transform from zero to nonzero or from nonzero to zero.
\end{proof}

In the remainder of this section we establish some more elementary properties of fracpairs, 
and discuss a related approach to ``fractions as pairs".
Let
\[n(R)=\{x\in R\mid \forall y\in R: ~x\cdot y =0~\Rightarrow~ y=0\}\]
be the set of non-zerodivisors of $R$.
It easily follows that 
\begin{equation}
\label{eqnn}
p\cdot q\in n(R)~\iff~ p\in n(R)\text{ and }q\in n(R).
~\footnote{Observe that for this property to hold, it suffices that $R$ is a commutative ring.}
\end{equation}

Let $R$ be a reduced commutative ring, then the relation $(p,q)\sim(r,s)$ 
defined by $p\cdot s= q\cdot r$ is an equivalence relation on $R\times n(R)$.
We show transitivity: assume $(p,q)\sim(r,s)\sim(u,v)$,
then $p\cdot s\cdot v=q\cdot r\cdot v=q\cdot s\cdot u$, hence $s\cdot (p\cdot v-q\cdot u)=0$. 
Since $s\in n(R)$, $p\cdot v-q\cdot u=0$, that is, $p\cdot v=q\cdot u$ and thus $(p,q)\sim(u,v)$.

\begin{proposition}
\label{prop:3}
Let $R$ be a reduced commutative ring and $p,q,r,s\in R$. 
If 
\[\text{$\dfrac{p}{q}=_\cceq\dfrac{r}{s}~$ and $~q\in n(R)$},\]
then $s\in n(R)$ and $R\models p\cdot s = q\cdot r$.
\end{proposition}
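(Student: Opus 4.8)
The plan is to mimic the proof of Proposition~\ref{prop:1}: isolate an invariant of cc-derivations and show it is preserved by every elementary proof step, then propagate it along an arbitrary derivation. Recall that $\frac pq =_\cceq \frac rs$ means there is a finite sequence of fracpairs $\frac pq = \frac{a_0}{b_0}, \frac{a_1}{b_1}, \dots, \frac{a_n}{b_n} = \frac rs$ in which consecutive fracpairs are related either by an instance of \CC\ (used from left to right or from right to left) or by rewriting numerator or denominator with an \CR-equality, i.e.\ with an equality that holds in $R$. Merging the \CR-rewrites into the \CC-steps, each step $\frac{a_i}{b_i} \rightsquigarrow \frac{a_{i+1}}{b_{i+1}}$ then has the property that there exist $x,y,z\in R$ such that, in $R$, $a_i = x\cdot z$, $b_i = y\cdot(z\cdot z)$, $a_{i+1} = x$, $b_{i+1} = y\cdot z$, or the same with $i$ and $i+1$ interchanged.

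The key one-step lemma I would establish is: if $\frac{a}{b}\rightsquigarrow\frac{a'}{b'}$ is such a step and $b\in n(R)$, then $b'\in n(R)$ and $R\models a\cdot b' = b\cdot a'$. For the left-to-right orientation, where $a = x\cdot z$, $b = y\cdot(z\cdot z)$, $a' = x$, $b' = y\cdot z$: since $b = y\cdot(z\cdot z)\in n(R)$, two applications of \eqref{eqnn} give $y\in n(R)$ and $z\in n(R)$, hence $b' = y\cdot z\in n(R)$ by \eqref{eqnn} again; and $a\cdot b' = (x\cdot z)\cdot(y\cdot z) = x\cdot y\cdot z\cdot z = (y\cdot(z\cdot z))\cdot x = b\cdot a'$ by \CR. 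For the right-to-left orientation, where $a = x$, $b = y\cdot z$, $a' = x\cdot z$, $b' = y\cdot(z\cdot z)$: from $b = y\cdot z\in n(R)$ we get $y,z\in n(R)$ by \eqref{eqnn}, hence $z\cdot z\in n(R)$ and $b' = y\cdot(z\cdot z)\in n(R)$, and once more $a\cdot b' = x\cdot y\cdot z\cdot z = (y\cdot z)\cdot(x\cdot z) = b\cdot a'$. The conclusion is identical for both orientations, so the lemma applies irrespective of the direction in which the individual \CC-steps are taken.

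It then remains to run an induction along the derivation under the hypothesis $q = b_0\in n(R)$, proving for every $i$ that $b_i\in n(R)$ and $R\models p\cdot b_i = q\cdot a_i$. The base case $i=0$ is immediate. For the inductive step, the one-step lemma yields $b_{i+1}\in n(R)$ and $a_i\cdot b_{i+1} = b_i\cdot a_{i+1}$; multiplying the induction hypothesis $p\cdot b_i = q\cdot a_i$ by $b_{i+1}$ and rearranging gives $b_i\cdot(p\cdot b_{i+1}) = b_i\cdot(q\cdot a_{i+1})$, and since $b_i\in n(R)$ we may cancel $b_i$ to conclude $p\cdot b_{i+1} = q\cdot a_{i+1}$. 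Instantiating $i=n$ delivers exactly $s = b_n\in n(R)$ and $R\models p\cdot s = q\cdot r$. The only real obstacle is the bookkeeping: one must carry the non-zerodivisor property for \emph{all} intermediate denominators $b_i$, not merely for the endpoints, because the cancellation of $b_i$ in the inductive step requires $b_i\in n(R)$; this is precisely what makes both directions of \eqref{eqnn} indispensable, since \CC\ can both introduce and delete the factor $z$ in the denominator. Everything else is routine commutative-ring algebra.
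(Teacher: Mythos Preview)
Your argument is correct and follows essentially the same route as the paper: induct on the length of a cc-derivation, use~\eqref{eqnn} in both directions to show that membership of the denominator in $n(R)$ is preserved by each \CC-step, and verify the cross-multiplication identity $a\cdot b' = b\cdot a'$ for each such step. The only cosmetic difference is that the paper packages the second invariant as the relation $\sim$ on $R\times n(R)$ (whose transitivity was established just before the proposition) and then concludes $(p,q)\sim(r,s)$, whereas you inline that transitivity argument by explicitly cancelling $b_i$ at each step; the content is the same.
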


\begin{proof}
This follows by induction on the length of a proof of $p/q=_\cceq r/s$.
It suffices to show that each \CC-instance 
\[\frac{u\cdot w}{v\cdot (w\cdot w)}=_\cceq\frac u{v\cdot w}\]
implies 
\[v\cdot (w\cdot w)\in n(R)~\iff~v\cdot w\in n(R),\]
which follows from~\eqref{eqnn}, and that each such instance
satisfies $\sim$, which follows from \CR\  because
$(u\cdot w)\cdot (v\cdot w)=(v\cdot (w\cdot w))\cdot u$.
Hence, $s\in n(R)$ and thus $(p,q)\sim(r,s)$, and thus $R\models p\cdot s = q\cdot r$.
\end{proof}

Propositions~\ref{prop:1} and \ref{prop:3} imply the following corollary.

\begin{corollary}
\label{cor:CM}
Let $R$ be a nontrivial reduced commutative ring, then 
\begin{enumerate}
\item The fracpairs $\lfrac{0}{1}$, $\lfrac{1}{1}$, and $\lfrac{1}{0}$ over $R$
are pairwise distinct,
\item
For $p,q\in R$, if $\lfrac{p}{1}=_\cceq\lfrac{q}{1}$, then $R \models p = q$,
\item
For $p,q\in n(R)$, if $\lfrac{1}{p}=_\cceq\lfrac{1}{q}$, then $R \models p = q$.
\end{enumerate}
\end{corollary}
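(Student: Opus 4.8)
The plan is to derive all three items directly from Propositions~\ref{prop:1} and \ref{prop:3}, since the corollary is stated as an immediate consequence of them.

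For item~(1), I would argue by contradiction against each possible coincidence. First, $\lfrac{1}{0}\ne_\cceq\lfrac{0}{1}$ and $\lfrac{1}{0}\ne_\cceq\lfrac{1}{1}$ follow from Proposition~\ref{prop:1}: take $p=1$, and $q/r$ equal to $0/1$ or $1/1$, both of which have nonzero denominator $r=1$ since $R$ is nontrivial. It remains to separate $\lfrac{0}{1}$ from $\lfrac{1}{1}$; here I would invoke Proposition~\ref{prop:3} with $q=1\in n(R)$ (note $1\in n(R)$ always, by~\eqref{eqnn} or directly), so that $\lfrac{0}{1}=_\cceq\lfrac{1}{1}$ would give $R\models 0\cdot 1 = 1\cdot 1$, i.e.\ $0=1$, contradicting nontriviality.

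Items~(2) and~(3) are then essentially restatements of Proposition~\ref{prop:3}. For~(2): with $p/1=_\cceq q/1$ and denominator $1\in n(R)$, Proposition~\ref{prop:3} yields $R\models p\cdot 1 = 1\cdot q$, hence $R\models p=q$ after applying \CR. For~(3): with $1/p=_\cceq 1/q$ and $p\in n(R)$ by hypothesis, Proposition~\ref{prop:3} yields $q\in n(R)$ and $R\models 1\cdot q = p\cdot 1$, hence $R\models p=q$.

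There is no real obstacle here; the only point requiring a moment's care is checking that $1\in n(R)$ so that Proposition~\ref{prop:3} applies in items~(1) and~(2) — this is immediate since $1\cdot y = y$, so $1\cdot y = 0$ forces $y=0$. Everything else is a direct substitution into the two cited propositions together with the commutative-ring axioms \CR.
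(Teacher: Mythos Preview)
Your proposal is correct and matches the paper's approach exactly: the paper simply states that the corollary follows from Propositions~\ref{prop:1} and~\ref{prop:3} without spelling out details, and you have filled in precisely the intended substitutions. The only added care you take---verifying $1\in n(R)$---is appropriate and straightforward.
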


We conclude this section by discussing a construction that 
generalizes the notion of the field of fractions in a related way.
Let $R$ be an arbitrary commutative ring, and let $S$ be a multiplicative subset of $R$
(that is, $1\in S$ and if $u,v\in S$, then $u\cdot v\in S$). 
Then $S^{-1}R$, the \emph{localization of $R$ with respect to $S$} (see, e.g., \cite{Matsumura}), 
is defined as the set of equivalence classes of pairs $(p,q)\in R\times S$ under the equivalence relation
\[(p,q)\sim(r,s)~\iff~\exists u\in S:u\cdot(p\cdot s-q\cdot r)=0.\]
Addition and multiplication in $S^{-1}R$ are defined as usual (cf.\ the definitions in~\eqref{eq:defs}):
\[(p,q) + (r,s) = (p\cdot s+q\cdot r,q\cdot s)
\quad\text{and}\quad
(p,q) \cdot (r,s) = (p\cdot r, q\cdot s).
\]
For $S=n(R)$ this yields the \emph{total quotient ring of $R$},
also called the \emph{total ring of fractions of $R$} (see~\cite{Matsumura}). 
If $R$ is a domain, then $S=R\setminus\{0\}$ and the total 
quotient ring is the same as the field of fractions $Q(R)$. 
Since $S$ in the construction contains no zero divisors, the natural ring homomorphism 
$R \to Q(R)$ is injective, so the total quotient ring is an extension of $R$. 
In the general case, the ring homomorphism from $R$ to $S^{-1}R$ might fail to be injective. 
For example, if $0 \in S$, then $S^{-1}R$ is the trivial ring. In the case
that $S$ is also \emph{saturated}, that is, $x\cdot y\in S~\Rightarrow~x\in S$, we have the following
connection with fracpairs over $R$.

\begin{proposition}
\label{prop:4}
Let $R$ be a reduced commutative ring, and let $S$ be a multiplicative subset of $R$
that is saturated.

If $p/q=_\cceq r/s$ for $p,r,s\in R$ and $q\in S$, then $s\in S$ and $S^{-1}R\models (p,q)\sim(r,s)$.
\end{proposition}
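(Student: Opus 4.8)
The plan is to follow the proof of Proposition~\ref{prop:3} almost verbatim, replacing the set $n(R)$ of non-zerodivisors by the saturated multiplicative set $S$, and the cross-product relation $\sim$ on $R\times n(R)$ by the localization relation $\sim$ on $R\times S$ (which, as recalled above, is an equivalence relation). So I would argue by induction on the length $n$ of a proof of $p/q=_\cceq r/s$, i.e.\ on the length of a chain of fracpairs
\[
p/q = p_0/q_0,\ p_1/q_1,\ \dots,\ p_n/q_n = r/s
\]
in which each consecutive pair is related by a single instance of the axiom \CC, read in either direction. In the base case $n=0$ we have $q=s$, so $s=q\in S$, and $(p,q)\sim(r,s)$ holds by reflexivity (witnessed by $1\in S$).

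For the induction step it suffices to prove that one \CC-step leading from $p/q$, with $q\in S$, to an adjacent fracpair $p_1/q_1$ forces (i)~$q_1\in S$ and (ii)~$S^{-1}R\models(p,q)\sim(p_1,q_1)$: then the induction hypothesis applies to the remaining chain from $p_1/q_1$ to $r/s$, giving $s\in S$ and $(p_1,q_1)\sim(r,s)$, and transitivity of $\sim$ on $R\times S$ yields $(p,q)\sim(r,s)$. Up to symmetry we may write the \CC-step as $p/q=(a\cdot c)/(b\cdot(c\cdot c))$ and $p_1/q_1=a/(b\cdot c)$ for some $a,b,c\in R$. For (i): using associativity from \CR\ we have $b\cdot(c\cdot c)=(b\cdot c)\cdot c$, so from $q=(b\cdot c)\cdot c\in S$ saturation of $S$ gives $b\cdot c\in S$, i.e.\ $q_1\in S$; and symmetrically, if the step is read the other way, then $q=b\cdot c\in S$, saturation gives $b,c\in S$, and multiplicativity of $S$ gives $q_1=(b\cdot c)\cdot c\in S$. (This is the analogue of the use of~\eqref{eqnn} in Proposition~\ref{prop:3}.) For (ii): by \CR\ one has $(a\cdot c)\cdot(b\cdot c)=(b\cdot(c\cdot c))\cdot a$, hence $1\cdot\bigl((a\cdot c)\cdot(b\cdot c)-(b\cdot(c\cdot c))\cdot a\bigr)=0$ with $1\in S$, which is exactly $(p,q)\sim(p_1,q_1)$ in $S^{-1}R$; the reverse direction of the step is again symmetric.

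I do not expect a real obstacle: this is essentially a transcription of the proof of Proposition~\ref{prop:3}. The only things needing care are bookkeeping — treating a \CC-step in both directions — and being explicit about which hypothesis on $S$ does what: saturation provides the two-sided preservation of membership in $S$ under \CC-rewriting (replacing the role of~\eqref{eqnn}), while multiplicativity, in particular $1\in S$, both makes the witness $u=1$ available for the relation $\sim$ and guarantees that $\sim$ is transitive on $R\times S$. As in Proposition~\ref{prop:3}, reducedness of $R$ is not actually needed for this argument and is retained in the hypothesis only for uniformity with the surrounding development.
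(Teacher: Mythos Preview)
Your argument is correct and runs parallel to the paper's: both proceed by induction on the length of a \CC-derivation, and both use saturation together with multiplicativity of $S$ to propagate membership of the denominator in $S$ across each step. The one substantive difference is in how the conclusion $(p,q)\sim(r,s)$ is reached. The paper does not re-run the induction for $\sim$; instead it cites Proposition~\ref{prop:3} to obtain the strict identity $R\models p\cdot s=q\cdot r$ and only then passes to the localization relation via the witness $1\in S$. You, by contrast, carry the localization relation $\sim$ itself through the induction, relying on its transitivity on $R\times S$. Your route is the more self-contained one: the paper's direct appeal to Proposition~\ref{prop:3} is, read literally, not quite licensed here, since that proposition assumes $q\in n(R)$ and the transitivity of the bare cross-product relation used in its proof was only established on $R\times n(R)$. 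Working directly with the localization equivalence, as you do, sidesteps this wrinkle --- and, as you observe at the end, makes it transparent that reducedness of $R$ is not actually used.
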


\begin{proof}
Because $S$ is multiplicatively closed and saturated, $s\in S$ follows from $q\in S$
by induction on the length of a proof for $p/q=_\cceq r/s$. 
By Proposition~\ref{prop:3} it follows that $R\models p\cdot s=q\cdot r$, and
because $1\in S$, $S^{-1}R\models (p,q)\sim(r,s)$.
\end{proof}

In Appendix~\ref{app:0} we show how Proposition~\ref{prop:4} can be used to prove separation 
of fracpairs with respect to cc-equivalence: we show that
for certain fracpairs $p/q$ and $r/s$ over $\Int/6\Int$, $p/q\ne_\cceq r/s$.

\subsection{Fracpairs and common cancellation fractions: constants and operations}
\label{subsec:2.2}
In Table~\ref{tab:3} we define constants and operations for fracpairs that are
tailored to the setting of common meadows~\cite{BP14}, that is, structures over
the signature
\[
\Sigma_{cm}=\{0,1,\bot,-(\_),(\_)^{-1},+,\cdot\}.
\]
In the next section we explain the concept of a common meadow and discuss the role of 
the constants $0, 1$, and $\bot$.
Note that the defining equations for addition \eqref{F1}, multiplication~\eqref{F2}, and 
additive inverse~\eqref{F3} all have a familiar form.
The defining equation for the multiplicative inverse~\eqref{FP7} ensures that if a 
denominator of a fracpair has a factor $0$, then that of its inverse also has a factor $0$.
We shall sometimes omit brackets in sums and 
products of fracpairs and write 
\[\dfrac pq+\dfrac rs \quad\text{and}\quad \dfrac pq\cdot\dfrac rs.\]

\begin{table}
\hrule
\begin{minipage}[t]{0.5\linewidth}\centering
\begin{Ralign}
\label{F1}\tag{F1}
\lh\dfrac xy\rh + \lh\dfrac uv\rh
&= \dfrac{(x\cdot v) + (u \cdot y)}{y \cdot v}
\\[5mm]
\label{F2}\tag{F2}
\lh\dfrac xy\rh \cdot \lh\dfrac uv\rh&= \dfrac{x\cdot u}{y \cdot v}
\\[5mm]
\label{F3}\tag{F3}
-\lh\dfrac{x}{y}\rh &= \dfrac{-x}{y}
\\[5mm]
\label{FP7}\tag{F4}
\lh\dfrac{x}{y}\rh^{-1} & = \dfrac{y \cdot y}{x \cdot y}
\end{Ralign}
\end{minipage}
\hfill
\begin{minipage}[t]{0.40\linewidth}\centering
\begin{Ralign}
\label{F5}\tag{F5}
0 &= \dfrac{0}{1}\\[2mm]
\label{F6}\tag{F6}
1 &= \dfrac{1}{1}\\[2mm]
\label{F7}\tag{F7}
\bot &= \dfrac{1}{0}
\end{Ralign}
\end{minipage}\vspace{4mm}
\hfill

\hrule
\caption{Defining equations for the operations and constants of $\Sigma_{cm}$ on fracpairs}
\label{tab:3}
\end{table}

Given a reduced commutative ring $R$ we define 
\[F(R)\]
as the set of  fracpairs over $R$.

\begin{proposition} 
\label{prop:5}
Let $R$ be a reduced commutative ring, and let the meadow operations from $\Sigma_{cm}$ 
be defined on $F(R)$ by equations~$\eqref{F2}-\eqref{FP7}$ in Table~\ref{tab:3}.
Then the relation $=_\cceq$ is a congruence on $F(R)$.
\end{proposition}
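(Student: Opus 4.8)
The plan is to verify that $=_\cceq$ is compatible with each of the four operations of $\Sigma_{cm}$ — that is, with $+$, $\cdot$, $-(\_)$, and $(\_)^{-1}$ — since closure under these is exactly what it means for $=_\cceq$ to be a congruence on $F(R)$. Because $=_\cceq$ is by definition the equivalence relation generated by the axiom \CC, it suffices to show that a single application of \CC\ inside one argument of an operation can be matched by a (finite) sequence of \CC-applications on the result; compatibility with arbitrary $=_\cceq$-derivations then follows by induction on derivation length, and compatibility with binary operations in both arguments follows by the usual two-step argument (change the first argument, then the second). So the whole proposition reduces to four ``one-step'' lemmas.

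First I would handle $-(\_)$, which is immediate: by \eqref{F3}, $-(x/y) = (-x)/y$, and replacing $x/y$ by a \CC-instance $ (x\cdot z)/(y\cdot(z\cdot z)) $ turns the negation into $(-(x\cdot z))/(y\cdot(z\cdot z)) = ((-x)\cdot z)/(y\cdot(z\cdot z))$, which is again a \CC-instance equal to $(-x)/(y\cdot z)$; here I use the \CR-consequence $-(x\cdot y)=(-x)\cdot y$. Next, multiplication \eqref{F2}: an application of \CC\ in the left factor, say $(x\cdot z)/(y\cdot(z\cdot z))$ in place of $x/y$, yields $((x\cdot z)\cdot u)/((y\cdot(z\cdot z))\cdot v)$; using commutativity and associativity (\CR) this is $((x\cdot u)\cdot z)/((y\cdot v)\cdot(z\cdot z))$, a \CC-instance equal to $(x\cdot u)/((y\cdot v)\cdot z) =_\cceq (x/y)\cdot(u/v)$ after the same \CR-rearrangement — matching the result of the original product. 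The second-factor case is symmetric by commutativity of $\cdot$.

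The two steps that require a little more care are addition and the multiplicative inverse. For addition \eqref{F1}, replacing $x/y$ by $(x\cdot z)/(y\cdot(z\cdot z))$ gives, before simplification, numerator $x\cdot z\cdot v + u\cdot y\cdot(z\cdot z)$ over denominator $y\cdot(z\cdot z)\cdot v$; I would factor a $z$ out of the numerator, writing it as $(x\cdot v + u\cdot y\cdot z)\cdot z$, and write the denominator as $(y\cdot v)\cdot(z\cdot z)$, so the fracpair is a \CC-instance equal to $(x\cdot v + u\cdot y\cdot z)/((y\cdot v)\cdot z)$, which by \CR\ is exactly $(x/y)+(u/v)$ after the \CC-step applied to the denominator — so one \CC-application on the output suffices. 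For the inverse \eqref{FP7}, the source of difficulty is that a \CC-step on $x/y$ changes \emph{both} numerator and denominator, and $(\_)^{-1}$ squares the denominator and multiplies it with the numerator, so the matching may need \emph{two} \CC-applications rather than one. Concretely, $((x\cdot z)/(y\cdot(z\cdot z)))^{-1} = (y\cdot(z\cdot z))^2 / ((x\cdot z)\cdot y\cdot(z\cdot z))$; the denominator here contains a factor $z^3$ and the numerator a factor $z^4$, whereas $(x/y)^{-1} = (y\cdot y)/(x\cdot y)$ and, applying \CC\ once to it, $=_\cceq (y\cdot(z\cdot z))/((x\cdot y)\cdot z)$ — wait, I would instead compute $(x/(y\cdot z))$, since $x/y =_\cceq x/(y\cdot z)$ is \emph{not} what \CC\ gives; rather \CC\ gives $(x\cdot z)/(y\cdot(z\cdot z)) =_\cceq x/(y\cdot z)$, so the two fracpairs being compared under the inverse are $x/(y\cdot z)$ and $(x\cdot z)/(y\cdot(z\cdot z))$. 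Their inverses are $(y\cdot z)^2/(x\cdot(y\cdot z))$ and $(y\cdot(z\cdot z))^2/((x\cdot z)\cdot y\cdot(z\cdot z))$ respectively, and I would show both reduce, via \CC, to a common fracpair — peeling powers of $z$ from the second until it meets the first. \textbf{The main obstacle} is precisely this bookkeeping of powers of $z$ in the inverse case: one must check that repeated \CC-steps can always be applied (the ``extra'' factor always occurs squared in the denominator, as required by the left-hand side of \CC) and converge to the same normal form; the reduced-ring hypothesis plays no role here — it is purely the algebra of \CR\ — but getting the exponents to line up is the part that needs explicit, careful computation.
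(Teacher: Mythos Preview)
Your proposal is correct and follows essentially the same route as the paper: reduce to compatibility with a single \CC-step, then check each of the four operations, with the inverse case requiring exactly two \CC-applications (the paper carries this out explicitly, peeling off one factor $z$ at a time just as you anticipate). One small slip: in your addition case the target of the comparison is $(x/(y\cdot z))+(u/v)$, not $(x/y)+(u/v)$, since the \CC-instance relates $(x\cdot z)/(y\cdot(z\cdot z))$ to $x/(y\cdot z)$; your computation itself lands correctly on that fracpair.
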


\begin{proof}
It suffices to show that if 
$\lfrac{p}{q}$ can be proven equal to $\lfrac{r}{s}$ with finitely many 
instances of the axiom \CC, then the same holds for their
image under the meadow operations as defined in  
Table~\ref{tab:3}. 

Let 
$A = \lfrac{(p \cdot r)} {(q\cdot (r \cdot r))}$
and $B= \lfrac{p}{(q \cdot r)}$, so $A=_\cceq B$. Then
\begin{itemize}
\item
$A+(\lfrac st) =_\cceq B+(\lfrac st)$ because
\begin{align*}
\dfrac{p \cdot r} {q\cdot (r \cdot r)}+\dfrac st
&=\dfrac{(p \cdot r)\cdot t+s\cdot(q\cdot (r\cdot r))}{(q\cdot (r\cdot r))\cdot t}
&&\text{by \eqref{F1}}\\
&=\dfrac{(p \cdot t+s\cdot(q\cdot r))\cdot r}{(q\cdot t)\cdot (r \cdot r)}
=_\cceq\dfrac{p \cdot t+s\cdot(q\cdot r)}{(q\cdot t)\cdot r}\\
&=\dfrac{p \cdot t+s\cdot(q\cdot r)}{(q\cdot r)\cdot t}\\
&=\dfrac{p}{q\cdot r} + \dfrac st,
&&\text{by \eqref{F1}}
\end{align*} 
and $(\lfrac st)+ A =_\cceq (\lfrac st)+B$ follows in a similar
way,

\item
$A\cdot(\lfrac st) =_\cceq B\cdot(\lfrac st)$ follows immediately
from \eqref{F2},
and so does 
\[(\lfrac st)\cdot A=_\cceq (\lfrac st)\cdot B,\]

\item $-A=_\cceq -B$: trivial (by~\eqref{F3}),

\item
$\displaystyle A^{-1} =_\cceq B^{-1}$
because
\begin{align*}
\lh\dfrac{p \cdot r} {q\cdot (r\cdot r)}\rh^{-1} 
&=\dfrac{(q\cdot (r\cdot r))\cdot
(q\cdot (r\cdot r))}{(p\cdot r)\cdot(q\cdot (r\cdot r))}
&&\text{by \eqref{FP7}}\\
&=\dfrac{((q\cdot (r\cdot r))\cdot
(q\cdot r))\cdot r}{((p \cdot r)\cdot q)\cdot (r\cdot r)}
=_\cceq\dfrac{(q\cdot (r\cdot r))\cdot
(q\cdot r)}{((p\cdot r)\cdot q)\cdot r}\\
&=\dfrac{((q\cdot (r \cdot r))\cdot
q)\cdot r}{(p\cdot q)\cdot (r\cdot r)}
=_\cceq\dfrac{(q\cdot (r \cdot r))\cdot
q}{(p \cdot q)\cdot r}\\
&=\dfrac{(q\cdot r) \cdot (q\cdot r)}{p\cdot(q\cdot r)}\\
&=\lh\dfrac{p} {q \cdot r}\rh^{-1}.
&&\text{by \eqref{FP7}}
\end{align*}
\qed
\end{itemize}
\phantom\qedhere
\end{proof}

Equations~\eqref{F5},
~\eqref{F6}, and ~\eqref{F7} in Table~\ref{tab:3} define the 
constants $0, 1$, and $\bot$ from the common meadow signature $\Sigma_{cm}$
as fracpairs.
So, in the setting with fracpairs, these constants can be seen as abbreviations
for $0/1$, $1/1$, and $1/0$, respectively.

\begin{definition}
\label{def:init}
Let $R$ be a reduced commutative ring.
\begin{enumerate}
\item
A \textbf{common cancellation fraction over $R$}, \textbf{cc-fraction} for short, is a fracpair 
over $R$ modulo cc-equivalence. 
For a fracpair $p/q$ over $R$, $[p/q]_{\cceq}$ is the cc-fraction represented by $p/q$.
\item
The \textbf{initial algebra
of cc-fractions over $R$} equipped with the constants 
and operations of~Table~\ref{tab:3}, notation
\[\CCcm(R)\] 
is defined by dividing out cc-congruence on $F(R)$. Thus,
for fracpairs $p/q$ and $r/s$ over $R$,
\[\CCcm(R)\models \frac pq=\frac rs ~\iff~ 
(\CC + \text{\textup{Table~\ref{tab:3}}})\vdash \frac pq=_\cceq\frac rs
~\iff~[p/q]_{\cceq}=[r/s]_{\cceq}.\]
\end{enumerate}
\end{definition}

We will write $p/q$ for a cc-fraction $[p/q]_{\cceq}$ 
if it is clear from the context that a cc-fraction is meant.

\subsection{Common cancellation fractions constitute a common meadow}
\label{subsec:2.3}
With the aim of regarding the multiplicative inverse as a total
operation, \emph{meadows} were introduced by Bergstra and Tucker in~\cite{BT07} 
as alternatives for fields with a purely equational axiomatization.%
   \footnote{An overview of meadows as a new theme in the theory of rings and fields 
   is available at \url{https://meadowsite.wordpress.com/}.}
Meadows are commutative von Neumann regular rings~(vNrr's) equipped with a 
weak multiplicative inverse $x^{-1}$ (thus $0^{-1}=0$) that is an involution (thus $(x^{-1})^{-1}=x$).
In particular, the class of meadows is a variety, so each substructure of a meadow is a meadow,
which is not the case for commutative vNrr's (cf.~\cite{BB15}).
In this paper we will mainly consider a
variation of the concept of a meadow, and therefore meadows will be further referred to as
\emph{involutive meadows}. 

\medskip

A \emph{common meadow}~\cite{BP14} is a structure with addition, multiplication, and a multiplicative
inverse, and differs from an involutive meadow in that 
the inverse of zero is not zero, but equal to an additional 
constant $\bot$ that propagates through all operations.
Considering $\bot$ as an error-value supports the intuition.
Common meadows are formally defined as structures over the signature 
\(
\Sigma_{cm}=\{0,1,\bot,-(\_),(\_)^{-1},+,\cdot\}
\)
that satisfy the axioms in Table~\ref{Mda}, and we write \Mda\ for this set of axioms.
We further assume that 
the inverse operation $(\_)^{-1}$ binds stronger than $\cdot$ and omit
brackets whenever possible, e.g., $x\cdot (y^{-1})$ is written as $x\cdot y^{-1}$.

The use of the constant $\bot$
is a matter of convenience only, it constitutes a derived constant with defining equation
$\bot=0^{-1}$, so all uses of $\bot$ can be avoided.

\begin{table}[t]
\centering
\hrule
\begin{align*}
(x+y)+z &= x + (y + z)
 & -(-x)&=x\\
x+y &= y+x
 & 0 \cdot (x \cdot x) &= 0 \cdot x  \\
x+0 &= x
 & (x^{-1})^{-1} &= x + (0 \cdot (x^{-1}))\\
x+(-x) &= 0 \cdot x 
 & x \cdot (x^{-1}) &= 1  + (0 \cdot (x^{-1}))\\
(x \cdot y) \cdot z &= x \cdot  (y \cdot  z) 
 & (x \cdot y)^{-1} &= (x^{-1}) \cdot (y^{-1})\\
x \cdot  y &= y \cdot  x 
 & 1^{-1}&=1\\
1\cdot x &= x 
 & 0^{-1} &= \bot \\
x\cdot (y+z)&=(x\cdot y)+(x\cdot z)
 & x + \bot &= \bot\\
&&x \cdot \bot &= \bot
\end{align*}
\hrule
\caption{$\Md_\bot$, a set of axioms for common meadows}
\label{Mda}
\end{table}

\medskip

Before relating cc-fractions to common meadows, we provide some more introduction
to the latter. The axioms of \Mda\ that feature a (sub)term of the
form $0\cdot t$ cover the case that $t$ equals $\bot$, for example, $\bot+(-\bot)=0\cdot\bot=\bot$.
Some typical \Mda-consequences are these:
\[\text{$x=x+0\cdot x$, ~$0\cdot 0=0$,~$-0=0$,  ~and~
$-(x\cdot y)=x\cdot (-y)$}\] 
(we prove the last identity in Appendix~\ref{app:1}).
Another \Mda-consequence can be called the
\emph{weak additive inverse property}:
\[x+(-x)+x=x\]
(which follows with the axiom $x+(-x)=0\cdot x$), and thus by the axiom $-(-x)=x$ also $(-x)+x+(-x)=-x$.
We show that given $x$, any $y$ satisfying $x+y+x=x$ and $y+x+y=y$ is unique (implicitly using commutativity and
associativity):
\begin{align*}
y&=y+x+y\\
&=y+x+(-x)+x+y
\\
&=y+x+(-x)
&&\text{by $y+x+y=y$}\\
&=y+x+(-x)+x+(-x)
\\
&=(-x)+x+(-x)
&&\text{by $x+y+x=x$}\\
&=-x.
\end{align*}
Furthermore, by $-(x\cdot y)=x\cdot (-y)$ we find $-\bot=\bot$ , and 
with the axiom $(x^{-1})^{-1} = x + 0 \cdot x^{-1}$ we find $\bot^{-1}=\bot$.
In summary, a common meadow is 
a commutative monoid both for addition and multiplication,
extended with a weak additive inverse, a multiplicative inverse except for zero, and the additional
element $\bot$ that is the image of the multiplicative inverse on zero and propagates through all 
operations. 
Let $\NM_1$ and $\NM_2$ be common meadows, then
\[f:\NM_1\to\NM_2\]
is a homomorphism if $f$ preserves $1, 0$, and $\bot$, and the operations have the morphism
property (that is, $f(x+y)=f(x)+f(y)$, $f(x\cdot y)=f(x)\cdot f(y)$, $f(-x)=-f(x)$, and 
$f(x^{-1})=(f(x))^{-1}$). In the case that $\NM_1$ is a minimal algebra (that is,
each of its elements is represented by a closed term over $\Sigma_{cm}$), $f$ is unique.

\begin{proposition}
\label{prop:hom}
Let $\NM_1$ and $\NM_2$ be common meadows and $f:\NM_1\to\NM_2$ be a function that satisfies
\begin{align*}
f(x+y)&=f(x)+f(y)\\
f(x\cdot y)&=f(x)\cdot f(y)\\
f(x^{-1})&=(f(x))^{-1}\\
f(1)&=1,
\end{align*}
then $f$ is a homomorphism.
\end{proposition}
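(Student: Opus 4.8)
The plan is to show that the three homomorphism conditions not among the hypotheses — preservation of $-(\_)$, of $0$, and of $\bot$ — are all forced, and to establish them in that order.

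First I would treat the additive inverse. The crucial input is already in the text: given $x$, the element $-x$ is the \emph{unique} $y$ satisfying $x+y+x=x$ and $y+x+y=y$; and $-x$ itself satisfies these, namely the weak additive inverse property $x+(-x)+x=x$ and (via $-(-x)=x$) also $(-x)+x+(-x)=-x$. Applying $f$ and using that $f$ commutes with $+$, the element $f(-x)$ satisfies $f(x)+f(-x)+f(x)=f(x)$ and $f(-x)+f(x)+f(-x)=f(-x)$, so by that uniqueness $f(-x)=-f(x)$.

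Next I would derive $f(0)=0$. In any common meadow $1+(-1)=0\cdot 1=0$ (the axiom $x+(-x)=0\cdot x$ together with $1\cdot 0=0$), hence $f(0)=f(1+(-1))=f(1)+f(-1)=f(1)+(-f(1))=1+(-1)=0$, using the previous step and $f(1)=1$. Finally $f(\bot)=\bot$ is immediate, since $\bot=0^{-1}$ is a derived constant in $\Mda$: $f(\bot)=f(0^{-1})=(f(0))^{-1}=0^{-1}=\bot$. With preservation of $0$, $1$, and $\bot$ and compatibility with all four operations in hand, $f$ meets the definition of a homomorphism of common meadows.

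I expect the only genuine subtlety to be the first step: one should resist trying to prove $f(0)=0$ directly from the additive data, since an element $a$ with $a+a=a$ need not be $0$ in a general common meadow (for instance in a product such as $\Rat_\bot\times\Rat_\bot$); the correct route is through the uniqueness characterization of the weak additive inverse recalled above. Everything else is a short computation inside $\Mda$.
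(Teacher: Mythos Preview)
Your argument is correct, but the route differs from the paper's in an interesting way. The paper proves the three missing conditions in the order $f(0)=0$, then $f(\bot)=\bot$, then $f(-x)=-f(x)$; you reverse the first and last. For $f(0)=0$ the paper does \emph{not} go through the additive inverse: from $f(1+0)=f(1)+f(0)$ it gets $1=1+f(0)$ in $\NM_2$, and then adds the element $-1\in\NM_2$ (not $f(-1)$) to both sides to obtain $0=0+f(0)=f(0)$. So your remark that one must pass through the uniqueness of the weak inverse to reach $f(0)=0$ is a bit strong: your caution that $a+a=a$ does not pin down $0$ is well taken, but the paper sidesteps this by using $1+f(0)=1$ and the availability of $-1$ in the target meadow rather than any idempotency argument. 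For $f(-x)=-f(x)$ the paper first uses $f(0)=0$ and multiplicativity to obtain $f(x)+(-f(x))=f(x)+f(-x)$, and then feeds that into the weak additive inverse identities; your direct appeal to the uniqueness characterization of $-x$ is cleaner and avoids the detour through $f(0)=0$ altogether. Both orderings work; yours has the advantage that the hardest step stands on its own, while the paper's has the advantage that $f(0)=0$ is established independently of any inverse reasoning.
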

\begin{proof}
Write $1_i$ for the unit in $\NM_i$ and $0_i$ for its zero.
We first show $f(0_1)=0_2$: 
observe that $0_i=1_i+(-1_i)$, and in $\NM_2$, 
$1_2=f(1_1)=f(1_1+0_1)=f(1_1)+f(0_1)=1_2+f(0_1)$.
Hence
$0_2=1_2+(-1_2)=1_2+(-1_2)+f(0_1)=0_2+f(0_1)=f(0_1)$.
It follows that $f(\bot)=f(0_1^{-1})=0_2^{-1}=\bot$. Finally,
we have to prove that
$f(-x)=-f(x)$: observe $f(x)+(-f(x))=0_2\cdot f(x)=f(0_1)\cdot f(x)=f(0_1\cdot x)=f(x+(-x))=f(x)+f(-x)$,
and hence
\begin{align*}
f(-x)
&=f((-x)+x+(-x))
&&\text{by the weak additive inverse property}\\
&=f(-x)+f(x)+f(-x)\\
&=(-f(x))+f(x)+(-f(x))\\
&=-f(x).
&&\text{by the weak additive inverse property}
\end{align*}
\end{proof}
Given a common meadow \NM, we finally notice that after forgetting $(\_)^{-1}$, the 
substructure $\{x\in\NM\mid 0\cdot x=0\}$ is  a commutative ring.

\medskip

In the previous section we already suggested a strong connection between cc-fractions and 
common meadows if one forgets about the underlying ring $R$ and the fracpairing operation.
This yields the following elementary result, which together with the next corollary
we see as our first main result.

\begin{theorem}
\label{thm:1}
Let $R$ be a reduced commutative ring, then $\CCcm(R)$ is a common meadow.
\end{theorem}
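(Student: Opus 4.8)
The plan is to verify that the structure $\CCcm(R)$, whose carrier is $F(R)$ modulo $=_\cceq$ with the constants and operations of Table~\ref{tab:3}, satisfies each of the axioms in Table~\ref{Mda}. By Proposition~\ref{prop:5} the operations are well defined on cc-fractions, so it suffices to check each axiom on representatives, using only the defining equations \eqref{F1}--\eqref{F7}, the ring axioms \CR\ holding in $R$, and the identities of Proposition~\ref{eq00} (in particular $x/0 =_\cceq 0/0$, which will handle the $\bot$-absorption axioms).

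First I would dispatch the axioms that reduce to pure ring computations on numerators and denominators once the operations are unfolded: associativity and commutativity of $+$ and $\cdot$, $x+0=x$, $1\cdot x = x$, distributivity, $-(-x)=x$, $1^{-1}=1$, and $0^{-1}=\bot$. For instance, $(\lfrac{x}{y})+(\lfrac{0}{1}) = \lfrac{(x\cdot 1 + 0\cdot y)}{y\cdot 1} = \lfrac{x}{y}$ by \CR, and $(\lfrac11)^{-1} = \lfrac{1\cdot 1}{1\cdot 1} = \lfrac11$ by \eqref{F6},\eqref{FP7}. For the associativity and distributivity axioms one unfolds both sides via \eqref{F1},\eqref{F2} and compares numerators and denominators; they match syntactically up to the ring axioms, without needing $=_\cceq$ at all. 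The axiom $0\cdot(x\cdot x) = 0\cdot x$ unfolds to $\lfrac{0}{1\cdot(y\cdot y)} =_\cceq \lfrac{0}{1\cdot y}$, which is an instance of Proposition~\ref{eq00} (both sides equal $\lfrac{0}{0}$ or, more directly, use that $\lfrac{0\cdot z}{w} =_\cceq \lfrac{0}{?}$ via \CC\ with numerator $0$). Similarly $x\cdot\bot = \lfrac{x}{y}\cdot\lfrac10 = \lfrac{x}{0} =_\cceq \lfrac00 = \lfrac10\cdot\lfrac10$-style reasoning gives $x\cdot\bot=_\cceq\bot$ using $\lfrac{x}{0}=_\cceq\lfrac00$ and then checking $\lfrac00 =_\cceq \lfrac10$?—here one must be careful: $\bot = \lfrac10$, and $\lfrac{x}{0} =_\cceq \lfrac{0}{0}$ by Proposition~\ref{eq00}, so I must also confirm $\lfrac00 =_\cceq \lfrac10$, which follows from $\lfrac00 =_\cceq \lfrac{1\cdot 0}{1\cdot(0\cdot 0)}$? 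No — rather from Proposition~\ref{eq00}'s top-right identity read with $x=1$: $\lfrac10 =_\cceq \lfrac00$. Likewise $x+\bot =_\cceq \bot$ since the sum has denominator $y\cdot 0$, hence $=_\cceq \lfrac{?}{0} =_\cceq \lfrac00 =_\cceq \bot$.

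The genuinely interesting axioms are the four that involve $(\_)^{-1}$ nontrivially, namely $x+(-x) = 0\cdot x$, $(x^{-1})^{-1} = x + 0\cdot x^{-1}$, $x\cdot x^{-1} = 1 + 0\cdot x^{-1}$, and $(x\cdot y)^{-1} = x^{-1}\cdot y^{-1}$. For $x+(-x)$ with $x = \lfrac{p}{q}$: by \eqref{F1},\eqref{F3} this is $\lfrac{p\cdot q + (-p)\cdot q}{q\cdot q} = \lfrac{0}{q\cdot q}$, while $0\cdot x = \lfrac{0}{1\cdot q} = \lfrac{0}{q}$; and $\lfrac{0}{q\cdot q} =_\cceq \lfrac{0}{q}$ follows from \CC\ with numerator $0$ (i.e.\ $\lfrac{0\cdot q}{1\cdot(q\cdot q)} =_\cceq \lfrac{0}{1\cdot q}$, then simplify with \CR). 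For $x\cdot x^{-1}$: by \eqref{F2},\eqref{FP7} the left side is $\lfrac{p\cdot(q\cdot q)}{q\cdot(p\cdot q)}$, and the right side $1 + 0\cdot x^{-1} = \lfrac11 + \lfrac{0}{x\cdot y$-stuff$}$ unfolds to something with numerator $p\cdot q$ over $p\cdot q$ shape—one shows the two sides are cc-equivalent by a single \CC-step cancelling the common factor; this is exactly the kind of computation already carried out in the proof of Proposition~\ref{prop:5}. The multiplicativity axiom $(x\cdot y)^{-1} = x^{-1}\cdot y^{-1}$ should expand symmetrically via \eqref{F2},\eqref{FP7} into the same fracpair on both sides after applying \CR. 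The main obstacle I anticipate is precisely these inverse-axiom verifications: unlike the ring axioms they are not syntactic identities but require genuine use of \CC\ (cancellation of a repeated factor and the $x/0 =_\cceq 0/0$ collapse), and one must track the denominators carefully to make sure the needed cancellation is licensed — this is where the reducedness of $R$, via Proposition~\ref{prop:1} and the $n(R)$-machinery, has already been invested, so no new difficulty arises, but the bookkeeping is the bulk of the work. I would organize it as a lemma-style enumeration: one displayed calculation per nontrivial axiom, each closed by citing \CC\ and \CR.

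Once every axiom of \Mda\ is checked to hold in $\CCcm(R)$, we conclude by the definition of common meadow (a $\Sigma_{cm}$-structure satisfying \Mda) that $\CCcm(R)$ is a common meadow, which is the assertion of Theorem~\ref{thm:1}.
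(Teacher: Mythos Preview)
Your proposal is correct and follows essentially the same approach as the paper's proof: invoke Proposition~\ref{prop:5} for well-definedness, then verify each \Mda-axiom on fracpair representatives using \CR\ and \CC, with the inverse-related axioms (and $x+(-x)=0\cdot x$, $0\cdot(x\cdot x)=0\cdot x$, and the $\bot$-absorption laws) being the only ones that require a genuine \CC-step. One small remark: your aside that reducedness of $R$ is ``invested'' in these calculations is off---the axiom verification itself uses only \CR\ and \CC\ and goes through for any commutative $R$; reducedness is used for the separation results (Proposition~\ref{prop:1}, Corollary~\ref{cor:CM}), not for Theorem~\ref{thm:1}.
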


\begin{proof}
By Proposition~\ref{prop:5}, 
$=_\cceq$ is a congruence with respect to $\Sigma_{cm}$.
Therefore, showing that $\CCcm(R)$ is a common meadow
only requires proof checking of all \Mda-axioms (see Table~\ref{Mda}).
We consider four cases, all other cases being equally straightforward:
\begin{align*}
\dfrac{p}{q} \cdot \lh\dfrac{p}{q}\rh^{-1} 
&= \dfrac{p}{q} \cdot \dfrac{q \cdot q}{p \cdot q} 
=\dfrac{p \cdot (q \cdot q)}{q \cdot (p \cdot q)}
= \dfrac{(p \cdot q) \cdot q }{p \cdot (q \cdot q)} \\
&= \dfrac{p \cdot q}{p \cdot q }
=\dfrac{1 \cdot (p \cdot q) + 0 \cdot 1}{ 1 \cdot (p \cdot q)}
=\dfrac{1}{1} + \dfrac{0}{p \cdot q}\\
&=1+ \dfrac{0}{1} \cdot \dfrac{q \cdot q}{p \cdot q}
=1 + 0 \cdot \dfrac{q \cdot q}{p \cdot q} 
= 1 + 0 \cdot \lh\dfrac{p }{ q}\rh^{-1},
\end{align*}
\begin{align*} 
\lh\lh\dfrac{p}{q}\rh^{-1}\rh^{-1} 
&=\lh\dfrac{q \cdot q}{p \cdot q}\rh^{-1}
=\dfrac{(p \cdot q)\cdot (p \cdot q)}{(p \cdot q) \cdot (q \cdot q)}
=\dfrac{((p \cdot q)\cdot p )\cdot q}{(p \cdot q) \cdot (q \cdot q)}\\
&=\dfrac{(p \cdot q)\cdot p}{(p \cdot q)\cdot q}
=\dfrac{p\cdot(p \cdot q)}{q\cdot(p \cdot q)}
=\dfrac{p}{q} + \dfrac{0\cdot q}{p \cdot q}\\
&=\dfrac{p}{q} + \dfrac01 \cdot \dfrac{q \cdot q}{p \cdot q}
=\dfrac{p}{q} + 0 \cdot \lh\dfrac{p}{q}\rh^{-1},
\end{align*}
\[
0^{-1}=\lh\dfrac 01\rh^{-1}=\dfrac{1\cdot 1}{0\cdot 1}=\dfrac10=\bot,
\quad\text{and }\quad
\dfrac pq+\bot=\dfrac pq+\dfrac 10=\dfrac q0\stackrel{\text{\ref{eq00}}}=\dfrac 10=\bot.
\]
\end{proof}

The construction of $\CCcm(R)$ is arguably the most straightforward construction of a 
common meadow.

With $\Rat_\bot$ we denote the common meadow that is defined as the field $\Rat$ of rational 
numbers expanded with an $\bot$-totalized inverse (that is, $0^{-1}=\bot$).%
  \footnote{$\Rat_\bot$ is introduced in \cite{BP14}.}
We have the following corollary of Theorem~\ref{thm:1}.

\begin{corollary}
\label{cor:1}
The unique homomorphism 
$f:\CCcm(\Int)\to\Rat_\bot$ 
is surjective, but not injective.
Thus, the common meadow $\CCcm(\Int)$
is a proper homomorphic pre-image of $\Rat_\bot$.
\end{corollary}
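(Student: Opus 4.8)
The plan is to establish the three assertions of Corollary~\ref{cor:1} in turn: existence and uniqueness of $f$, surjectivity, and failure of injectivity. Existence and uniqueness come for free from the general machinery: $\CCcm(\Int)$ is a common meadow by Theorem~\ref{thm:1}, and it is a minimal algebra, since every cc-fraction $[p/q]_\cceq$ over $\Int$ is the value of a closed $\Sigma_{cm}$-term — indeed $p/1$ is built from $0,1,-(\_),+$ via \eqref{F5},\eqref{F6},\eqref{F3},\eqref{F1}, and $p/q = (p/1)\cdot(q/1)^{-1} + 0\cdot(q/1)^{-1}$ in any common meadow (or one argues more directly using \eqref{F2} and \eqref{FP7}). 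By the remark following Proposition~\ref{prop:hom}, a homomorphism out of a minimal common meadow is unique; and Proposition~\ref{prop:hom} itself tells us we need only produce a map preserving $+,\cdot,(\_)^{-1},1$. The natural candidate is the ``evaluation'' map sending $[p/q]_\cceq$ to $p/q\in\Rat$ when $q\neq 0$ and to $\bot$ when $q=0$; one checks this is well defined on cc-equivalence classes (using Proposition~\ref{prop:1} to see that the $q=0$ case is cc-equivalence-closed, and the familiar cross-multiplication argument in the $q\neq 0$ case), and that it respects the operations of Table~\ref{tab:3} by direct comparison with the defining equations of $\Rat_\bot$.

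Surjectivity is then immediate: every rational number $a/b$ with $b\neq 0$ is $f([a/b]_\cceq)$, and $\bot\in\Rat_\bot$ is $f([1/0]_\cceq)$. So $f$ is onto.

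For non-injectivity I would exhibit two cc-fractions over $\Int$ that have the same image but are distinct in $\CCcm(\Int)$. The clean choice is $[1/0]_\cceq$ versus $[1/2 + (-1/2)]_\cceq$; more concretely, consider $2/0$ and $3/0$, or better, compare $[1/0]_\cceq$ with $[0/0]_\cceq$ — wait, those coincide by Proposition~\ref{eq00}. Instead take two \emph{different} ``infinities'': in $\Rat_\bot$ there is only one $\bot$, so $f([2/0]_\cceq) = \bot = f([3/0]_\cceq)$, whereas $2/0 =_\cceq 0/0 =_\cceq 3/0$ again collapses. The genuinely non-trivial separation must therefore live among the finite fractions: take a prime $p$ and compare $[p/p]_\cceq$ with $[1/1]_\cceq$. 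We have $f([p/p]_\cceq) = p/p = 1 = f([1/1]_\cceq)$ in $\Rat$, but I claim $[p/p]_\cceq \neq [1/1]_\cceq$ in $\CCcm(\Int)$, i.e.\ $p/p \neq_\cceq 1/1$. To see this, note that $p\in n(\Int)$ since $\Int$ is a domain, so Proposition~\ref{prop:3} applies: if $p/p =_\cceq 1/1$ then $\Int\models p\cdot 1 = p\cdot 1$, which is true and gives no contradiction — so Proposition~\ref{prop:3} alone is too weak here. The right tool is Corollary~\ref{cor:CM}(3) in combination with a localization/separation argument: one shows $1/p \neq_\cceq 1/1$ for $p\neq\pm1$ because $\Int \not\models p = 1$, and then that $p/p$, though equal to $1/1$ under $f$, is a strictly coarser identification — concretely $p/p =_\cceq (p\cdot 1)/(p\cdot 1)$ but the cc-equivalence cannot cancel the common factor $p$ unless it appears squared in the denominator (the shape of \CC). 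I expect the main obstacle to be precisely this: pinning down a pair of fracpairs over $\Int$ that is demonstrably \emph{not} cc-equivalent yet mapped together, which requires a separation argument in the style of Proposition~\ref{prop:4} / Appendix~\ref{app:0} (e.g.\ mapping into a suitable localization $S^{-1}\Int$ or total quotient ring where the two images differ). Once such a witness is produced, non-injectivity follows, and hence $\CCcm(\Int)$ is a proper homomorphic pre-image of $\Rat_\bot$.
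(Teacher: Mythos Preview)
Your treatment of existence, uniqueness, and surjectivity is fine and essentially matches the paper's. The gap is in non-injectivity. You correctly settle on $p/p$ versus $1/1$ as the witness (the paper takes $p=2$) and correctly note that Proposition~\ref{prop:3} and Corollary~\ref{cor:CM} are too coarse to separate them. But your proposed rescue --- localize $\Int$ at some saturated multiplicative set $S$ and invoke Proposition~\ref{prop:4} --- cannot work over $\Int$ itself: in any $S^{-1}\Int$ one has $(p,p)\sim(1,1)$ trivially, since the cross product $p\cdot 1 - 1\cdot p$ already vanishes. More generally, because $\Int$ is a domain, its total quotient ring is $\Rat$, and every localization of $\Int$ embeds there; no such invariant will ever distinguish $p/p$ from $1/1$.

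The paper's move is to first pass to a ring with zero divisors. The ring map $\Int\to\Int/6\Int$ induces a common-meadow homomorphism $\CCcm(\Int)\to\CCcm(\Int/6\Int)$, so it suffices to separate $1/1$ and $2/2$ over $\Int/6\Int$. There $2$ is a zero divisor, and Appendix~\ref{app:02} carries out the separation: assuming $1/1=_\cceq 2/2$ and multiplying by $2/1$ and by $5/1$ (using the explicit multiplication table) forces $2/1=_\cceq 5/1$, which \emph{is} refuted by Proposition~\ref{prop:4} with $S=\{1,5\}\subset\Int/6\Int$. The missing idea in your sketch is precisely this detour through a non-domain quotient before any localization-style argument can bite.
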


\begin{proof}
Observe that by Corollary~\ref{cor:CM}, $\CCcm(\Int)$ is nontrivial 
($0=\lfrac 01$, $1=\lfrac 11$, and $\bot=\lfrac01$ are pairwise distinct).
Define 
~$f:\CCcm(\Int)\to\Rat_\bot$~
by $f(\lfrac nm)=n\cdot m^{-1}$. 
Then $f$ is well-defined and according to Proposition~\ref{prop:hom} a homomorphism:
\begin{itemize}
\item
$f(x+y)=f(x) + f(y)$ because 
$\Rat_\bot\models x\cdot y^{-1}+u\cdot v^{-1}=(x\cdot v + u\cdot y)\cdot (y\cdot v)^{-1}$
(see Appendix~\ref{app:1} or \cite[Prop.2.2.2]{BP14}), 
\item
$f(x\cdot y)=f(x)\cdot f(y)$ follows trivially,
\item For the case $f((x)^{-1})$ first observe that $\Rat_\bot\models 0\cdot x\cdot x^{-1}=
0\cdot(1+0\cdot x^{-1})=0\cdot x^{-1}$, hence
$f((n/m)^{-1})=f((m\cdot m)/(n\cdot m))=m\cdot m\cdot(n\cdot m)^{-1}=m\cdot(1+0\cdot m^{-1})\cdot n^{-1}
=(m+0\cdot m^{-1})\cdot n^{-1}=(m^{-1})^{-1}\cdot n^{-1}=
(n\cdot m^{-1})^{-1}=(f(n/m))^{-1}$, and
\item $f(\lfrac 11)=1\cdot 1^{-1}=1$.
\end{itemize}

Each element in $\Rat_\bot$ can be represented by $n\cdot m^{-1}$ with $n,m\in\Int$,
hence $f$ is surjective. However, $f$ is not injective: $f(1/1)=f(2/2)=1$, while 
$\CCcm(\Int)\not\models 1/1=2/2$ because otherwise the homomorphism from 
$\CCcm(\Int)$ onto $\CCcm(\Int/6\Int)$
implies $\CCcm(\Int/6\Int)\models 1/1=2/2$, and the latter is a contradiction by Proposition~\ref{prop:4}, as is 
spelled out in Appendix~\ref{app:02}.
\end{proof}

Our second main result is a characterization of the initial common meadow.

\begin{theorem}
\label{thm:2}
The initial common meadow $\I(\Sigma_{cm},\Md_\bot)$ is isomorphic to $\CCcm(\Int)$.
\end{theorem}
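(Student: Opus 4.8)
The plan is to establish the isomorphism $\I(\Sigma_{cm},\Md_\bot)\cong\CCcm(\Int)$ by exhibiting a homomorphism in each direction and showing the composite in one direction is the identity, using initiality. Since $\I(\Sigma_{cm},\Md_\bot)$ is initial in the class of all common meadows, and $\CCcm(\Int)$ is a common meadow by Theorem~\ref{thm:1}, there is a unique homomorphism $g:\I(\Sigma_{cm},\Md_\bot)\to\CCcm(\Int)$. The content of the theorem is that $g$ is an isomorphism. For surjectivity of $g$ I would argue that $\CCcm(\Int)$ is a minimal algebra: every cc-fraction $[n/m]_\cceq$ with $n,m\in\Int$ is represented by a closed $\Sigma_{cm}$-term, namely the term $\underline n\cdot\underline m^{\,-1}$ where $\underline k$ abbreviates the closed term $1+\cdots+1$ (or its additive inverse, or $0$) denoting the integer $k$ — this works because, using the defining equations of Table~\ref{tab:3} together with \eqref{F5}--\eqref{F7}, one checks $\underline k =_\cceq k/1$ in $F(\Int)$ and $(m/1)^{-1} =_\cceq (m\cdot m)/(m\cdot m\cdot m)=_\cceq 1/m$ when sorting out the factors, and then $(n/1)\cdot(1/m) =_\cceq n/m$. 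Hence $g$ is onto, so it suffices to prove $g$ is injective.

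For injectivity the natural route is to construct a homomorphism $h:\CCcm(\Int)\to\I(\Sigma_{cm},\Md_\bot)$ with $h\circ g = \mathrm{id}$; since $g$ is surjective this forces $g$ to be injective (and $h\circ g=\mathrm{id}$ plus surjectivity of $g$ gives that $h$ and $g$ are mutually inverse). To build $h$, I would first define a map on fracpairs $F(\Int)\to\I(\Sigma_{cm},\Md_\bot)$ by sending $n/m$ to the interpretation of $\underline n\cdot\underline m^{\,-1}$ in the initial common meadow, then verify via Proposition~\ref{prop:hom} that this respects the operations of Table~\ref{tab:3}, and finally check that it factors through $=_\cceq$, i.e.\ that the single axiom \CC\ holds in the initial common meadow under this interpretation — equivalently, that $\Md_\bot\vdash \underline x\cdot\underline z\cdot(\underline y\cdot\underline z\cdot\underline z)^{-1} = \underline x\cdot(\underline y\cdot\underline z)^{-1}$, which is really the meadow identity $ac(b c c)^{-1}=a(bc)^{-1}$, provable from $\Md_\bot$ (this is the kind of elementary common-meadow identity relegated to Appendix~\ref{app:1}, using $(xy)^{-1}=x^{-1}y^{-1}$, $x x^{-1}=1+0\cdot x^{-1}$, and $0\cdot(x\cdot x)=0\cdot x$). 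That yields a well-defined homomorphism $h$. Then $h\circ g:\I(\Sigma_{cm},\Md_\bot)\to\I(\Sigma_{cm},\Md_\bot)$ is a homomorphism from the initial common meadow to itself, hence equal to $\mathrm{id}$ by initiality, completing the argument.

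The main obstacle I anticipate is not the diagram-chase but the bookkeeping in two places: first, checking that every element of $\CCcm(\Int)$ is hit by a closed term — i.e.\ the identities $\underline n\cdot\underline m^{\,-1} =_\cceq n/m$ in $F(\Int)$ — which requires care about how $(m/1)^{-1}=(m\cdot m)/(m\cdot m)$ reduces under \CC\ (one must cancel correctly, and note $m=0$ is the degenerate case giving $\bot$); and second, verifying that the \CC\ axiom is a $\Md_\bot$-consequence when transported to the initial common meadow, which is where the specific shape of \CC\ (with the repeated factor $z\cdot z$ in the denominator, matching the common-meadow phenomenon $x\cdot x^{-1}=1+0\cdot x^{-1}$ rather than $=1$) really earns its keep. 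Both are routine but must be done explicitly. I would also remark that once $\CCcm(\Int)\cong\I(\Sigma_{cm},\Md_\bot)$ is known, Corollary~\ref{cor:1} immediately tells us the initial common meadow is a proper homomorphic pre-image of $\Rat_\bot$, so in particular $\Rat_\bot$ is not initial — a point worth recording.
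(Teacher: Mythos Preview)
Your proposal is correct and rests on the same two ingredients the paper uses --- that every closed $\Sigma_{cm}$-term reduces to the form $p\cdot q^{-1}$ with $p,q$ closed over $\Sigma_r$, and that \CC\ is a \Mda-consequence in the shape $(xz)(y(zz))^{-1}=x(yz)^{-1}$ --- but you package them differently. The paper works directly with the unique map $f:\I(\Sigma_{cm},\Mda)\to\CCcm(\Int)$: surjectivity from the normal form, injectivity by observing that any derivation of $p/q=_\cceq r/s$ (a finite chain of \CC-steps modulo the ring identities) can be replayed in \Mda, since \Mda\ proves the \CR-identities on closed $\Sigma_r$-terms and proves the \CC-shaped equation above. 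You instead build an explicit section $h$ and invoke initiality a second time to force $h\circ g=\mathrm{id}$; this is a cleaner categorical argument that replaces the somewhat informal ``replay the proof'' step by a single well-definedness check. Both routes, however, depend on the fact that numeral arithmetic is sound in the initial common meadow, i.e.\ $\Mda\vdash\underline{n+m}=\underline n+\underline m$, $\underline{n\cdot m}=\underline n\cdot\underline m$, $\underline{-n}=-\underline n$ for integers $n,m$ (the paper flags this in a footnote, noting in particular $\Mda\vdash p+(-p)=0$ for closed $\Sigma_r$-terms by structural induction); you will need it both for the homomorphism conditions on $h$ and for the well-definedness check against \CC, so state it explicitly. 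Two minor slips to fix: by \eqref{FP7} one gets $(m/1)^{-1}=(1\cdot 1)/(m\cdot 1)=1/m$ directly, with no intermediate $(m\cdot m)/(m\cdot m\cdot m)$; and Proposition~\ref{prop:hom} is stated for maps between common meadows, so apply it only after $h$ has descended to $\CCcm(\Int)$, not to the raw map on $F(\Int)$.
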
 

\begin{proof}
We use the following two properties of common meadows.
First, for each closed term $t$ over the meadow signature $\Sigma_{cm}$, there exist closed terms 
$p$ and $q$ over the signature $\Sigma_r=\{0,1,-(\_),+,\cdot\}$ of rings such that
$\Mda\vdash t=p\cdot q^{-1}$ (this follows by induction on the structure of $t$, applying 
the identity $\Mda\vdash x\cdot y^{-1}+u\cdot v^{-1}=(x\cdot v + u\cdot y)\cdot (y\cdot v)^{-1}
$). 
Secondly, $\Mda\vdash x\cdot (x^{-1}\cdot x^{-1})=x^{-1}$
(see Appendix~\ref{app:1} or \cite[Prop.2.2.1]{BP14}), and hence
\[\Mda\vdash (x\cdot z)\cdot (y\cdot (z\cdot z))^{-1}=x\cdot (y\cdot z)^{-1},\]
which can be seen as a characterization of~\CC\ (see Table~\ref{tab:CC}).

Because $\CCcm(\Int)$ is a model of \Mda, there exists a homomorphism
\[f: \I(\Sigma_{cm},\Md_\bot)\to \CCcm(\Int).\]
For $p$ a closed term over $\Sigma_r$, we find $f(p)=\lfrac p1$ (this follows
by structural induction on $p$), and thus
\[f((p)^{-1})=\lh\dfrac p1\rh^{-1}~\stackrel{\eqref{FP7}}=~\dfrac{1\cdot 1}{1\cdot p}=\dfrac 1p. \]
Hence, for $p,q$ closed terms over $\Sigma_r$,
$f(p\cdot q^{-1})=\lfrac pq$. 

It follows immediately that $f$ is surjective. 
Also, $f$ is injective: if for closed terms $p,q$ over $\Sigma_r$,
$f(p\cdot q^{-1})=f(r\cdot s^{-1})$, thus
\[\CCcm(\Int)\models\dfrac pq=\dfrac rs,\]
then we can find a proof using \CC\ and the \CR-axioms.
For closed terms over $\Sigma_r$, $\Mda$ implies the \CR-identities\footnote{In
  particular, $p+(-p)=0$ (or equivalently, $0\cdot p=0$); this follows  
  by structural induction on $p$.}
and each \CC-instance in this proof 
can be mimicked in $\I(\Sigma_{cm},\Md_\bot)$
with an instance of the equation 
$(x\cdot z)\cdot (y\cdot (z\cdot z))^{-1}=x\cdot (y\cdot z)^{-1}$.
Hence, $\Mda\vdash p\cdot q^{-1}=r\cdot s^{-1}$, 
so $\I(\Sigma_{cm},\Md_\bot)\models p\cdot q^{-1}=r\cdot s^{-1}$.
\end{proof}

\section{Term rewriting for meadows}
\label{sec:3}
In Section~\ref{subsec:3.1} we provide details about canonical terms for involutive 
meadows, for common meadows, and for cc-fractions.
Until now we have not been  successful in resolving questions about the existence 
of specifications for meadows with nice term rewriting properties, and we
provide in Section~\ref{subsec:3.2} a survey of relevant negative results. 
In Section~\ref{subsec:3.3} we define ``rational fractions" by defining 
an initial algebra that is isomorphic to $\Rat_\bot$. 

\subsection{DDRSes and canonical terms}
\label{subsec:3.1}
A so-called DDRS (datatype defining rewrite system, see~\cite{BP14a}) 
is an equational specification
over some given signature that, interpreted as a rewrite system by
orienting the equations from left-to-right, is ground complete and thus defines
(unique) normal forms for closed terms.
Given some DDRS, its \emph{canonical term algebra} (CTA) is determined as the
algebra over that signature with the set of normal forms as its domain, and 
in the context of CTAs we prefer to speak of \emph{canonical terms} rather than 
normal forms.
An abstract datatype (ADT) may be understood as the isomorphism class of its 
instantiations which are in our case CTAs.

In Table~\ref{DDRSZ} we define a DDRS for the 
ADT $\Int$ over the signature $\Sigma_r=\{0,1,-(\_),+,\cdot\}$ of rings.
Observe that the symmetric variant of equation~\eqref{eq:9}, that is,
\((-x)+(y+1)=((-x)+y)+1\),
is an instance of equation~\eqref{eq:5}. 

\begin{table}
\hrule
\begin{minipage}[t]{0.5\linewidth}\centering
\begin{Ralign}
\label{eq:3}\tag{r1}
-0&=0
\\
\label{eq:4}\tag{r2}
-(-x) &= x
\\[4mm]
\label{eq:5}\tag{r3}
x+(y+z) &= (x + y) + z
\\
\label{eq:6}\tag{r4}
x+0&= x\\
\label{eq:7}\tag{r5}
1+(-1)  &= 0 \\
\label{eq:8}\tag{r6}
(x + 1)+(-1)  &= x \\
\label{eq:9}\tag{r7}
x+(-(y+1))&=(x+(-y))+(-1)
\end{Ralign}
\end{minipage}\vspace{4mm}
\hfill
\begin{minipage}[t]{0.47\linewidth}\centering
\begin{Ralign}
\label{eq:10}\tag{r8}
	0+x     &= x\\
\label{eq:11}\tag{r9}
	(-1)+1  &= 0 \\
\label{eq:12}\tag{r10}
	(-(x+1)) + 1 &= -x\\
\label{eq:13}\tag{r11}
	(-x) + (-y) &= -(x+y)\\[2mm]
\label{eq:14}\tag{r12}
	x \cdot 0 &= 0  \\
\label{eq:15}\tag{r13}
	x\cdot 1 &= x \\
\label{eq:16}\tag{r14}
	x\cdot (- y) &= (-x)\cdot y \\
\label{eq:17}\tag{r15}
	x\cdot(y + z) &= (x\cdot y) + (x\cdot z)
\end{Ralign}
\end{minipage}
\hrule
\caption{A DDRS for $\Int$}
\label{DDRSZ}
\end{table}

\begin{definition}
\label{def:int}
\textbf{Positive numerals for $\Int$} are defined inductively:
$1$ is a positive numeral, and $n+1$ is a positive numeral if $n$ is one.
\textbf{Negative numerals for $\Int$} have the form $-(n)$ with $n$ a positive 
numeral. A \textbf{numeral for $\Int$} is either a positive or a negative numeral, or $0$. 
\\[1mm]
\textbf{Canonical terms for $\Int$} are the numerals for $\Int$, and we write
\[\widehat\Int\]
for the canonical term algebra for integers with these 
canonical terms.
\end{definition}

Thus, $\widehat\Int$ constitutes a datatype that implements (realizes) the ADT \Int\
by the DDRS specified in Table~\ref{DDRSZ}.
Some other specifications of \Int\ in the ``language of rings''
are discussed in~\cite{BP85}, but these have negative
normal forms 
\[-1, ~(-1)+(-1), ~((-1)+(-1))+(-1),...\] 

Below we define three more types of {canonical terms} 
and their associated canonical term algebras.
The (involutive) meadow $\Rat_0$ is defined as the field $\Rat$ of rational numbers 
with a zero-totalized inverse (so $0^{-1}=0$ and $(\_)^{-1}$
is an involution; see, e.g.,~\cite{BT07,BM11a,BBP13}).

\begin{definition}
\label{def:2}
\textbf{Canonical terms for $\Rat_0$} are the canonical terms for $\Int$
(see Definition~\ref{def:int}) and
closed expressions of the form $n \cdot m^{-1}$ and $(-n )\cdot m^{-1}$ such that
\begin{itemize}\setlength\itemsep{-.2em}
\item[$\ast$] $n$ is a positive numeral, and 
\item[$\ast$] $m$ is a positive numeral larger than 1, and 
\item[$\ast$] $n$ and $m$ (viewed as natural numbers) are relatively prime.
\end{itemize}
With $\widehat{\Rat_0}$ 
we denote the canonical term algebra for the abstract 
datatype $\Rat_0$ with these canonical terms. 
\end{definition}

Thus $\widehat{\Rat_0}$ is a datatype that implements the 
ADT $\Rat_0$.

\begin{definition}
\label{def:3}
\textbf{Canonical terms for $\Rat_\bot$} are the canonical terms for $\Rat_0$
and the additional constant $\bot$.
\\[1mm]
With $\widehat{\Rat_\bot}$ we denote the canonical term algebra for the abstract 
datatype $\Rat_\bot$ with these canonical terms. 
\end{definition}

Thus $\widehat{\Rat_\bot}$ 
is a datatype that implements the ADT $\Rat_\bot$.

\begin{definition}
\label{def:4}
\textbf{Canonical terms for $\CCcm(\Int)$} are all fracpairs
$\lfrac{n}{m}$ with $n$ and $m$ canonical terms for $\Int$
(see Definition~\ref{def:int}) and $m$ not a negative numeral,
such that one of the following conditions is met, where we write $m_\Int$ for the 
integer denoted by $m$:
\begin{itemize}\setlength\itemsep{-.2em}
\item[$\ast$] $n= 0$, and $m_\Int$ is squarefree, or
\item[$\ast$] $m=0$ and $n=1$, or
\item[$\ast$] $m = 1$, or
\item[$\ast$] $m \neq 0$ and $n \neq 0$ and  $m\neq 1$ and for every prime $p$,
if $m_\Int$ is a multiple of $p \cdot p$ then $n_\Int$ is not a multiple of $p$.
\end{itemize}
$\widehat\CCcm(\widehat\Int)$ is the canonical term algebra with these 
canonical terms.
\end{definition}
So, $\widehat\CCcm(\widehat\Int)$ constitutes a datatype 
that implements the ADT $\CCcm(\Int)$.

\subsection{Nonexistence of DDRSes for $\widehat\CCcm(\widehat\Int)$,
for $\widehat{\Rat_0}$, and for $\widehat{\Rat_\bot}$}
\label{subsec:3.2}
In this section we prove some negative results concerning the existence of 
certain DDRSes.

\begin{theorem}
\label{thm:3}
There is no DDRS for $\widehat\CCcm(\widehat\Int)$. 
\end{theorem}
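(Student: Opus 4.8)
The plan is to argue by contradiction: suppose a DDRS $\mathcal{D}$ over the signature $\Sigma_{cm}$ existed whose canonical term algebra is $\widehat\CCcm(\widehat\Int)$. Since a DDRS is a finite equational specification that, read left-to-right, is ground confluent and terminating, the underlying equational theory must be a conservative extension that proves exactly the equalities valid in $\CCcm(\Int)$ between closed terms. First I would collect the relevant facts about the canonical terms from Definition~\ref{def:4}: the denominators appearing in canonical terms range over \emph{all} squarefree positive integers (when $n=0$) and, more generally, the family of canonical terms is infinite in a way that is controlled by the prime factorisation of the denominator. The key structural observation is that in $\widehat\CCcm(\widehat\Int)$ one cannot bound, by a single finite rewrite system, how a term like $\lfrac{1}{k}$ for large squarefree $k$ gets built up: the canonical form of $\lfrac{1}{p_1}\cdot\lfrac{1}{p_2}\cdots\lfrac{1}{p_j}$ is $\lfrac{1}{p_1\cdots p_j}$, and there are infinitely many primes.

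The core of the argument I would develop is a pumping-style contradiction. A terminating, finite rewrite system has a bounded amount of ``local'' information: there is a constant $c$ (depending only on $\mathcal{D}$, e.g.\ the maximum size of left- and right-hand sides) such that any rewrite step changes a term only within a context of size $\le c$. I would then exhibit an infinite family of closed $\Sigma_{cm}$-terms $t_k$ — for instance $t_k = (1^{-1})\cdot(\underbrace{1+\cdots+1}_{k})^{-1}$ built so that the intended value is $\lfrac{1}{k}$ with $k$ ranging over squarefree numbers, or rather a family designed so that reaching the canonical form forces combining information about \emph{all} prime factors of $k$ simultaneously. The canonical term $\lfrac{1}{k}$ for $k = p_1\cdots p_j$ squarefree has a denominator that is ``globally irreducible'' in the sense that no proper factor may be cancelled; yet any single rewrite rule sees only a bounded-depth piece of a term. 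By choosing $k$ with enough distinct prime factors (more than $c$, say), I would argue that no finite rewrite system can both normalise $t_k$ to $\lfrac{1}{k}$ and leave $\lfrac{1}{k}$ itself in normal form, because recognising the ``relatively prime / squarefree'' side-conditions of Definition~\ref{def:4} requires arithmetic (primality, divisibility by $p\cdot p$) that is not expressible by finitely many equational rewrite rules over $\Sigma_{cm}$. Concretely, I expect to show: if $\mathcal{D}$ normalises every closed term correctly, then from the rules one can derive, for infinitely many triples, an equality that $\CCcm(\Int)$ does \emph{not} satisfy — e.g.\ one would be forced to also rewrite $\lfrac{1}{p\cdot p}$ toward $\lfrac{1}{p}$ or vice versa on the nose, violating either confluence or the separations established in Proposition~\ref{prop:1} and Corollary~\ref{cor:CM}.

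The main obstacle, and the step needing real care, is making the ``bounded context'' intuition into a rigorous combinatorial lemma that applies to an arbitrary DDRS over $\Sigma_{cm}$ rather than to one particular nice system. I would isolate this as: \emph{for any finite terminating rewrite system, the map sending a closed term to its normal form factors through a finite-state-like invariant on the multiset of prime factors of the denominator}, which then clashes with the unbounded, genuinely number-theoretic side-conditions (squarefreeness, the $p\cdot p \mid m \Rightarrow p \nmid n$ condition) defining the canonical terms. A clean way to package this is: suppose the DDRS exists; derive that the set of canonical terms it produces is, for each fixed ``shape'' of input term, eventually periodic in the relevant parameters; but $\{\,m \mid m \text{ squarefree}\,\}$ and the coprimality predicate are not eventually periodic; contradiction. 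I would also need to double-check that the signature really is $\Sigma_{cm}$ (so $\bot$ and the totalised inverse are available) and that no auxiliary hidden sorts or functions are permitted in the notion of DDRS used here — if hidden functions \emph{were} allowed the theorem would need a different proof, so I would state explicitly that the DDRS is over $\Sigma_{cm}$ exactly. Once the periodicity-versus-number-theory tension is set up, the remaining bookkeeping (choosing the witness family $t_k$, counting contexts, invoking ground confluence) is routine.
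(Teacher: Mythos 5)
Your overall intuition (finitely many rules can only inspect a bounded pattern, while the canonical terms are carved out by unbounded number-theoretic conditions) points in the right direction, but the lemma you make the argument hinge on is not correct as stated, and the proposal never isolates the step that actually does the work. You claim that for any finite terminating rewrite system ``the map sending a closed term to its normal form factors through a finite-state-like invariant'' and is eventually periodic in the relevant parameters. Term rewriting systems are not finite-state in this sense: they can compute arbitrary computable functions of the input (the paper itself recalls, citing the result of Bergstra--Tucker, that \emph{every} computable datatype has a DDRS once one is free to choose the normal forms and to add auxiliary functions), so squarefreeness and coprimality tests are perfectly computable by a terminating confluent system. Hence any proof built on ``this arithmetic is not expressible by finitely many rewrite rules'' cannot succeed; the obstruction is not computational power but the fact that the normal forms are \emph{prescribed in advance} to be the canonical terms of Definition~\ref{def:4}.

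The argument that does work, and which your write-up only gestures at, is a direct matching analysis on the prescribed canonical terms. Since every non-squarefree $\lfrac{0}{m}$ is non-canonical, it must be reducible; take $m$ a positive numeral far larger than every left-hand side of the supposed DDRS $E$. A rule $e\in E$ that matches $\lfrac{0}{m}$ can only do so with a left-hand side of the shape $\lfrac{0}{x\underline{+k}}$ or $\lfrac{y}{x\underline{+k}}$, i.e.\ its denominator pattern ends in a variable and therefore matches \emph{every} sufficiently long numeral, squarefree or not. Now pick a squarefree numeral $\ell$ with $\ell_\Int>m_\Int$: the same rule $e$ rewrites the canonical term $\lfrac{0}{\ell}$, contradicting that canonical terms are normal forms. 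Note that this uses nothing about confluence, termination orderings, pumping over contexts, or your auxiliary family $t_k$ of $\Sigma_{cm}$-terms, and it is insensitive to auxiliary operations (so your worry that hidden functions would require a different proof is misplaced --- the paper explicitly remarks the same proof goes through). Your proposal, by contrast, would stall exactly at the point where the ``bounded context implies eventual periodicity of normalisation'' lemma has to be proved, because that lemma is false.
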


\begin{proof}
Suppose $E$ is a finite set of rewrite rules for the signature of 
$\widehat\CCcm(\widehat\Int)$ that constitutes a DDRS. 
Notice that if $m$ is a positive numeral with $m_\Int$ not squarefree, 
then the fracpair 
\[\dfrac{0}{m}\] 
is not a normal form. Assume that $m$ exceeds the length 
of all left-hand sides of equations in $E$ (for some suitable measure),
thus $\lfrac{0}{m}$ must match with a left-hand side of say equation $e \in E$ 
that has the form 
\[\dfrac{0}{x\underline{+k}} \quad\text{or}\quad
\dfrac{y}{x\underline{+k}}\]
where we assume the following notational convention, writing $\equiv$ for syntactic
equivalence:
\[\text{$x\underline{+0} \equiv x$, and for all natural numbers $n$,
$x\underline{+(n+1)} \equiv (x \underline{+n}) +1$.}\]

Now choose a canonical term $\ell$ with $\ell_\Int$ squarefree and larger than 
$m_\Int$. It follows that $e$ rewrites 
$\lfrac{0}{\ell}$ so that this term cannot be a normal form which contradicts 
the definition of canonical terms (Definition~\ref{def:4}). 
\end{proof}

This proof works just as well if a DDRS is allowed to make use of auxiliary 
operations. Moreover, very similar proofs work for $\widehat{\Rat_0}$ and 
$\widehat{\Rat_\bot}$, as we state in the next theorem.

\begin{theorem}
\label{thm:4}
There is no DDRS for $\widehat{\Rat_0}$ and for $\widehat{\Rat_\bot}$. 
\end{theorem}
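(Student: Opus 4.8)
The plan is to mimic, almost verbatim, the argument used for Theorem~\ref{thm:3}, since the canonical term sets for $\widehat{\Rat_0}$ and $\widehat{\Rat_\bot}$ exhibit the same ``unbounded constraint'' phenomenon: whether a closed expression $n\cdot m^{-1}$ is canonical depends on a coprimality condition between $n$ and $m$, and coprimality cannot be checked by matching against finitely many fixed left-hand sides once $m$ is large enough. First I would argue the case of $\widehat{\Rat_0}$. Suppose $E$ is a finite DDRS for $\widehat{\Rat_0}$. Pick a positive numeral $m>1$ whose length exceeds every left-hand side in $E$ (in a suitable syntactic measure), and consider a non-canonical closed term of the form $a\cdot m^{-1}$ where $a$ is a positive numeral sharing a common factor with $m$ (e.g.\ $a\equiv m$, so that $m\cdot m^{-1}$ must rewrite). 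This term is not a normal form, so it must match the left-hand side of some rule $e\in E$. Because $m$ was chosen larger than all left-hand sides, the portion of $e$ that matches the denominator can only constrain $m$ through a pattern of the shape $x\underline{+k}$ (in the notation of the proof of Theorem~\ref{thm:3}), i.e.\ it leaves the ``bulk'' of the numeral in a variable; hence $e$ cannot be sensitive to the actual arithmetic value of $m$, only to its residue ``$+k$''.

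Then I would produce a \emph{canonical} term that the same rule $e$ nonetheless rewrites, yielding the contradiction. Concretely, choose a positive numeral $\ell$ that is coprime to $a$ (so that $a\cdot\ell^{-1}$, after any needed sign bookkeeping, is in lowest terms and hence canonical by Definition~\ref{def:2}), with $\ell$ congruent to $m$ in whatever finite way the matching against $e$ requires (this is possible because ``$\underline{+k}$''-patterns only pin down an additive offset, and there are infinitely many integers coprime to the fixed number $a$ at any such offset, by elementary number theory — e.g.\ Dirichlet, or just that $\gcd(a,\cdot)$ takes the value $1$ infinitely often along any arithmetic progression with the right constraints). Since $e$ fired on $a\cdot m^{-1}$ using only the offset information, it fires identically on $a\cdot \ell^{-1}$, so $a\cdot\ell^{-1}$ is not a normal form — contradicting that it is a canonical term. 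As with Theorem~\ref{thm:3}, the argument is unaffected if $E$ is permitted auxiliary operations, since the matching-length bound still applies to the (finitely many, finite) left-hand sides.

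For $\widehat{\Rat_\bot}$ the situation is identical: its canonical terms are exactly those of $\widehat{\Rat_0}$ together with the single extra constant $\bot$, so adding $\bot$ changes nothing about the coprimality-driven obstruction, and the very same pair of terms $a\cdot m^{-1}$, $a\cdot \ell^{-1}$ delivers the contradiction. I would simply remark that the $\widehat{\Rat_0}$ argument applies mutatis mutandis.

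The main obstacle — really the only point requiring care — is making precise the claim that a rule whose left-hand side is shorter than the numeral $m$ can only ``see'' a bounded additive offset of $m$, and then exhibiting a squarefree/coprime $\ell$ compatible with that offset. This is exactly the step that the proof of Theorem~\ref{thm:3} handles via the $x\underline{+k}$ convention and the choice of $\ell_\Int$ squarefree and large; here the only new twist is that the relevant property is ``coprime to the fixed numeral $a$'' rather than ``squarefree'', but infinitely many such $\ell$ exist in any admissible congruence class, so the substitution goes through. Everything else is bookkeeping parallel to the earlier proof.
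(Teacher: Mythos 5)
Your overall strategy is the paper's: adapt the proof of Theorem~\ref{thm:3} by exhibiting a non-canonical term whose denominator numeral outsizes every left-hand side of the finite rule set $E$, and then transporting the rule that must rewrite it onto a canonical term. However, there is a genuine gap at precisely the step you flag as ``the only point requiring care'', and your suggested instantiation $a\equiv m$ makes that step false. If the numerator of the chosen non-canonical term is as large as (indeed equal to) the denominator, so the term is $m\cdot m^{-1}$, then the left-hand side of the rule $e$ that rewrites it may be \emph{non-linear}, e.g.\ of the form $y\cdot(y\,\underline{+k})^{-1}$, in the simplest case $y\cdot y^{-1}$: this pattern is small (so the size bound does not exclude it), it matches $m\cdot m^{-1}$ by binding $y$ to the whole numeral $m$, and it is sensitive to the \emph{relation} between numerator and denominator, not merely to a bounded additive offset of the denominator. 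Replacing the denominator by a numeral $\ell$ coprime to $a=m$ destroys the match, so no contradiction arises from such a rule; your assertion that $e$ ``cannot be sensitive to the actual arithmetic value of $m$, only to its residue'' is exactly what fails here and is never justified in your write-up.

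The repair is what the paper does: keep the numerator a \emph{fixed small} term, chosen independently of $m$. The paper considers $(1+1)\cdot m^{-1}$ with $m_\Int$ even and exceeding the size of every left-hand side; the left-hand side of $e$ then has the shape $t\cdot(x\,\underline{+k})^{-1}$ where $t$ matches $1+1$, and --- this is the observation you omit --- the denominator variable $x$ cannot occur in $t$, since $x$ is bound to the large remainder of $m$ while every variable of $t$ is bound to a subterm of $1+1$; without loss of generality $t\in\{1+1,\ y,\ 1+y,\ y+1,\ y+y\}$. Only with this variable-disjointness does the match survive replacing the denominator by any odd canonical numeral $\ell$ with $\ell_\Int>m_\Int$, for which $(1+1)\cdot\ell^{-1}$ is canonical by coprimality, yielding the contradiction; the same pair of terms works verbatim for $\widehat{\Rat_\bot}$, and the argument is insensitive to auxiliary operations or sorts. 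So your proof goes through once you fix a small numerator (the number-theoretic garnish about Dirichlet is then unnecessary --- oddness suffices) and add the check that the numerator pattern shares no variable with the denominator pattern; as written, with $a\equiv m$, the transport step is invalid.
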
 

\begin{proof}
Suppose $E$ is a finite set of rewrite rules for $\widehat{\Rat_0}$
that constitutes a DDRS and consider a term
$\lfrac{(1+1)}{m}$ with $m_\Int$ a multiple of $2$ that exceeds the largest equation in $E$
(for some suitable measure). Because $\lfrac{(1+1)}{m}$ is 
not a canonical term it is rewritten by say equation $e \in E$. The left-hand side 
of $e$ must have the form $\lfrac{t}{(x\underline{+k})}$ for 
some $t$ and $k$ so that $t$ matches with $1+1$. 
From this condition it follows that $x$ is not a variable in $t$ and without
loss of generality we may assume that 
\[t \in\{1+1,y,1+y, y+1, y+y\}.\]
Now let $\ell$ be a $\Rat_0$ ($\Rat_\bot$) canonical term so that $\ell_\Int $ 
is odd and exceeds $m_\Int$. We find that $\lfrac{(1+1)}{\ell}$ is a canonical term
according to the definition thereof but at the same time it is not a normal form 
because it can be rewritten by means of $e$. Thus, such $E$ does not exist.

Finally, observe that the above reasoning also applies for the case of 
$\widehat{\Rat_\bot}$.
\end{proof}

The above proof also demonstrates that auxiliary functions won't help, not 
even auxiliary sorts will enable the construction of a DDRS for $\widehat{\Rat_0}$ 
or for $\widehat{\Rat_\bot}$.
We notice that without the constraint that the normal forms are given in advance 
(by way of a choice of canonical terms) 
the matter is different because according to~\cite{BT95}, a DDRS can be found 
with auxiliary functions for each computable datatype.

We return to the question of DDRSes for rational numbers in Section~\ref{sec:4}, where
we express some (negative) conjectures about their existence.

\subsection{An initial algebra of fractions for rational numbers}
\label{subsec:3.3}
In this section we introduce ``rational fractions'', that is, fractions
tailored to an initial specification of the rational numbers in 
the style of $\CCcm(\Int)$. 
We start off with the definition of a certain class of reduced commutative rings.

Given a commutative ring $R$, consider the following conditional property:
\begin{equation}
\label{eq:BMCR}
\forall x,y,z \in R: x\cdot(y^2 + z^2 + 1) = 0  ~\Rightarrow~  x = 0.
\end{equation}
We first show that not each commutative ring that satisfies condition~\eqref{eq:BMCR} is reduced.%
  \footnote{Of course, not every reduced commutative ring
   satisfies property~\eqref{eq:BMCR}, for example $\Int/6\Int$ does not.}
The commutative ring $\Int[X]/(X^2)$, i.e., the polynomial ring in one indeterminate $X$ modulo the 
ideal generated by $X^2$, has as its elements polynomials of the form
$nX + m$ with $n,m\in \Int$ (see e.g.~\cite{Matsumura}). This ring  is not reduced 
($X\cdot X=0$ and $X\ne 0$), but satisfies property~\eqref{eq:BMCR}: suppose
\[(nX + m)\cdot((pX + q)^2 + (rX + s)^2+1) = 0,\]
thus 
\begin{equation}
\label{eq:ZZ}
(n(q^2 + s^2 + 1)+2m(pq+rs))X + m(q^2 + s^2 +1) = 0.
\end{equation}
Hence, $m(q^2 + s^2+1) = 0$, thus $m = 0$, and hence we find for $X$'s coefficient in~\eqref{eq:ZZ}
that also $n(q^2 + s^2+1)=0$, thus $n = 0$, and therefore $nX + m=0$.

\medskip

\begin{table}
\centering
\hrule
\begin{align}
\label{RFRS}
\tag{\RFRS}
\dfrac{x\cdot(((z\cdot z) + (u\cdot u)) + 1)}{y\cdot(((z\cdot z) + (u\cdot u)) + 1)} = \dfrac xy
\end{align}
\hrule
\caption{\RFRS, the rational fracpair axiom}
\label{tab:RFP}
\end{table}

\begin{definition}
\label{def:rf}
Let $R$ be a reduced commutative ring that satisfies property~\eqref{eq:BMCR}.
The cancellation equivalence
generated by the {rational fracpair axiom \RFRS} defined in Table~\ref{tab:RFP} 
and the common
cancellation axiom \CC\ for fracpairs (defined in Table~\ref{tab:CC})
is called
\[\text{\textbf{rf-equivalence}, notation $=_\RFR$.}
\]
\end{definition}

Let $R$ be a nontrivial reduced commutative ring that satisfies property~\eqref{eq:BMCR}. 
Concerning consistency (and thus separation) it follows for fracpairs $p/q$ and $r/0$ over $R$ 
that if $q$ is nonzero, then  
\[p/q\ne_\RFR r/0\]
because~\eqref{eq:BMCR} 
ensures that application of \RFRS\ cannot turn a nonzero denominator into zero or vice versa 
(cf.\ Proposition~\ref{prop:1}).
Furthermore, as in Corollary~\ref{cor:CM}, it follows that 
\begin{itemize}
\item
for $p,q\in R$, if $\lfrac{p}{1}=_\RFR \lfrac{q}{1}$, 
then $R \models p = q$, and
\item for $p,q\in n(R)$, if $\lfrac{1}{p}=_\RFR \lfrac{1}{q}$, then $R \models p = q$.
\end{itemize}

With respect to the operations on fracpairs in Table~\ref{tab:3} (thus, with respect to the signature of
common meadows) it follows that {$=_\RFR$} is a congruence on $F(R)$, the set of fracpairs over $R$
(cf.\ Proposition~\ref{prop:5}). 
\begin{definition}
\label{def:rational}
Let $R$ be a reduced commutative ring that satisfies property~\eqref{eq:BMCR}.
\begin{enumerate}
\item
A \textbf{rational fraction over $R$} is a fracpair 
over $R$ modulo rf-equivalence. 
\item
The \textbf{initial algebra
of rational fractions over $R$} equipped with the constants 
and operations of~Table~\ref{tab:3}, notation
\[\FPR\]
is defined by dividing out rf-congruence on $F(R)$.\end{enumerate}
\end{definition}
We end this section with an elementary result for the particular case of $\FPr$. 

\begin{theorem}
\label{thm:5}
The structure
$\FPr$ is a common meadow that is isomorphic to $\Rat_\bot$.
\end{theorem}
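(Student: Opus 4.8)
The plan is to realise $\FPr$ as a homomorphic image of the common meadow $\CCcm(\Int)$ of Theorem~\ref{thm:1}, and then to identify that image with $\Rat_\bot$ via a factorisation of the homomorphism of Corollary~\ref{cor:1}.

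\emph{Step 1: $\FPr$ is a common meadow.} Since $=_\cceq$ is contained in $=_\RFR$, the quotient map $F(\Int)\to\FPr$ factors through $\CCcm(\Int)$, giving a surjective map $\pi\colon\CCcm(\Int)\to\FPr$; because both algebras carry the operations of Table~\ref{tab:3} and $=_\RFR$ is a congruence for these (as noted just above Definition~\ref{def:rational}), $\pi$ is a homomorphism. By Theorem~\ref{thm:1} we have $\CCcm(\Int)\models\Mda$, and since $\Mda$ is a set of equations the class of common meadows is a variety, hence closed under homomorphic images; therefore $\FPr\models\Mda$. (Alternatively one simply repeats the four computations in the proof of Theorem~\ref{thm:1}, which use only \CC\ and \CR.)

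\emph{Step 2: factoring the homomorphism.} Let $f\colon\CCcm(\Int)\to\Rat_\bot$ be the homomorphism of Corollary~\ref{cor:1}, so $f(\lfrac nm)=n\cdot m^{-1}$. As $f$ is already constant on $=_\cceq$-classes, it will be constant on $=_\RFR$-classes once we check that it respects \RFRS. For integers $z,u$ put $K=z^{2}+u^{2}+1$; then $K\geq 1$, so $K$ is a nonzero element of $\Rat$ and has a genuine inverse inside $\Rat_\bot$. Consequently, for $y\neq 0$ we have $(x\cdot K)\cdot(y\cdot K)^{-1}=x\cdot y^{-1}\cdot K\cdot K^{-1}=x\cdot y^{-1}$ in $\Rat_\bot$ (using $(vw)^{-1}=v^{-1}w^{-1}$ and $K\cdot K^{-1}=1$), and both sides of the corresponding \RFRS-instance map to $\bot$ when $y=0$. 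Hence $f=\bar f\circ\pi$ for a homomorphism $\bar f\colon\FPr\to\Rat_\bot$ with $\bar f(\lfrac nm)=n\cdot m^{-1}$, and $\bar f$ is surjective because $f$ is.

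\emph{Step 3: $\bar f$ is injective.} The crux is the cancellation lemma
\[\frac{n\cdot d}{m\cdot d}=_\RFR\frac nm\qquad\text{for all }n,m\in\Int\text{ and all nonzero }d\in\Int,\]
which I would prove by induction on the number of prime factors of $d$ (with multiplicity). The base case $d=\pm1$ uses \CR\ and the identity $\frac{x}{-y}=_\cceq\frac{-x}{y}$ of Proposition~\ref{eq00}; the inductive step reduces at once to $d=p$ prime. For prime $p$ the classical pigeonhole fact that $-1$ is a sum of two squares modulo $p$ supplies integers $a,b$ with $p\mid a^{2}+b^{2}+1$ (for $p=2$ take $a=1$, $b=0$); put $K=a^{2}+b^{2}+1$ and $K'=K/p$, both positive integers. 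Then, applying \RFRS\ backwards, then the instance $\frac{s\cdot t^{2}}{r\cdot t^{2}}=_\cceq\frac{s\cdot t}{r\cdot t}$ of \CC\ (take $x:=s\cdot t$, $y:=r$, $z:=t$) with $s:=n\cdot K'$, $r:=m\cdot K'$, $t:=p$, and finally \RFRS\ forwards,
\begin{align*}
\frac{p\cdot n}{p\cdot m}
&=_\RFR\frac{(p\cdot n)\cdot K}{(p\cdot m)\cdot K}
=\frac{n\cdot(p^{2}\cdot K')}{m\cdot(p^{2}\cdot K')}\\
&=_\cceq\frac{n\cdot(p\cdot K')}{m\cdot(p\cdot K')}
=\frac{n\cdot K}{m\cdot K}
=_\RFR\frac nm.
\end{align*}
(For $p=2$ the chain collapses, and indeed $2=1^{2}+0^{2}+1$ makes $\frac{2\cdot n}{2\cdot m}=_\RFR\frac nm$ in one step.) Granting the lemma, suppose $\bar f(\lfrac nm)=\bar f(\lfrac{n'}{m'})$. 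If the common value is $\bot$ then $m=m'=0$, and by Proposition~\ref{eq00} $\lfrac n0=_\cceq\lfrac10=_\cceq\lfrac{n'}0$, so these agree in $\FPr$. Otherwise $m,m'\neq 0$ and $n\cdot m'=n'\cdot m$ holds in $\Int$, whence the lemma gives $\lfrac nm=_\RFR\lfrac{n\cdot m'}{m\cdot m'}=\lfrac{n'\cdot m}{m'\cdot m}=_\RFR\lfrac{n'}{m'}$. Thus $\bar f$ is a bijective homomorphism, hence an isomorphism, and $\FPr\cong\Rat_\bot$.

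\emph{Expected main obstacle.} Steps~1 and~2 are bookkeeping once one notices that common meadows form a variety and that $z^{2}+u^{2}+1$ is invertible in $\Rat$. The real content is the cancellation lemma, and inside it the number-theoretic input that $-1$ is a sum of two squares modulo every prime: \RFRS\ only cancels factors of the special shape $z^{2}+u^{2}+1$ while \CC\ only removes \emph{squared} factors, so one must manufacture a suitable multiple of the given prime factor that has the form $z^{2}+u^{2}+1$ and then strip off the square part with \CC. Keeping track of the signs of $m$ and $d$, and of the degenerate prime $p=2$, are the remaining fiddly points.
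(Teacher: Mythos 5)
Your proposal is correct and rests on exactly the same key ingredients as the paper's proof: the fact that every prime divides some $a^2+b^2+1$ (from $-1$ being a sum of two squares mod $p$), combined with the chain \RFRS{}-backwards, \CC, \RFRS{}-forwards to cancel a common prime factor, which is literally the paper's displayed derivation. The only difference is packaging — you factor the homomorphism $f$ of Corollary~\ref{cor:1} through $\FPr$ and verify bijectivity via an explicit cancellation lemma, whereas the paper reduces every fracpair to a canonical representative and identifies these with the canonical terms of $\Rat_\bot$ — so this counts as the same approach, with the injectivity bookkeeping spelled out a bit more fully than in the paper.
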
 

\begin{proof}
In~\cite{BM11a} the following folk theorem in 
field theory is recalled (and proven, see Lemma~7): For each prime number $p$
and $u\in \Int_p$, there exist $v, w \in\Int_p$ such that $u=v^2+w^2$. This implies
the following property (see~\cite[Corollary~1]{BM11a}\footnote{The report version of this paper
     (\texttt{arXiv:0907.0540v3}) uses a different numbering: Lemma~6 and Corollary 1, respectively.}):  
\begin{equation}
\label{eq:BM}
\text{For each prime number $p$ there exist $a,b,m\in\Nat$
such that $m\cdot p = a^2 + b^2+1$.}
~\footnote{A proof of~\eqref{eq:BM} is the following: let $a,b\in \Int_p$ be such that $-1 = a^2 +b^2$. 
Then $a^2 +b^2 +1$ is a multiple of $p \in \Nat$.}
\end{equation}
Now, given some prime number $p$, let $m,a,b$ be such that~\eqref{eq:BM} is satisfied.
For arbitrary $c,d\in\Nat$ we derive
\begin{align*}
\dfrac{ c\cdot p}{d\cdot p} 
&=_\RFR~\dfrac{c\cdot p\cdot (a^2 + b^2+1)}{d\cdot p\cdot (a^2 + b^2+1)}
&&\text{by \ref{RFRS}}\\[2mm]
&= \quad\dfrac{c\cdot p\cdot m\cdot p}{d\cdot p\cdot m\cdot p}
&&\text{by \eqref{eq:BM}}\\[2mm]
&=_\RFR~\dfrac{c\cdot m\cdot p}{d\cdot m\cdot p}
&&\text{by \CC}\\[2mm]
&= \quad\dfrac{c\cdot (a^2 + b^2+1)}{d\cdot (a^2 + b^2+1)}
&&\text{by \eqref{eq:BM}}\\[2mm]
&=_\RFR~\dfrac cd.
&&\text{by \ref{RFRS}}
\end{align*}
So, for $n,m\in\Nat$ it follows that:
\begin{enumerate}
\item[$\ast$]
$\lfrac nm =_\RFR~\lfrac pq$\quad with $p,q$  relative prime if $n\ne0\ne m$,
\item[$\ast$]
$\lfrac nm =_\RFR~\lfrac 01$\quad if $n = 0$ and $m\ne0$,
\item[$\ast$]
$\lfrac nm =_\RFR~\lfrac 10$\quad if $m = 0$ (cf.\ Proposition~\ref{eq00}).
\end{enumerate}
Hence, we can represent each rational fraction by a fracpair that matches 
the definition of canonical terms for $\Rat_\bot$, identifying
$\lfrac nm$ with $n\cdot m^{-1}$ if $n\ne 0$ and $m\not\in\{0,1\}$, 
with $n$ if $m=1$ or [$n=0$ and $m\ne0$], and with $\bot$ if $m=0$
(cf.~Definition~\ref{def:2}). 

The observation that the defining equations for the constants and operations
of common meadows given in Table~\ref{tab:3} match those for
$\Rat_\bot$ finishes the proof.
\end{proof}

\section{Conclusions and digression}
\label{sec:4}
We lifted the notion of a quotient field construction 
by dropping the requirement that in a ``fraction $\lfrac pq$'' (over
some integral domain) the $q$ must 
not be equal to zero and came up with the notion of \emph{fracpairs} defined
over a reduced commutative ring $R$, and \emph{common cancellation fractions} (cc-fractions)
that are defined by a simple equivalence on fracpairs over $R$.
Natural definitions of the constants and operations of a meadow on 
fracpairs yield a common meadow (Thm.\ref{thm:1}), and
this is arguably the most straightforward construction of a common meadow. 
Furthermore, we showed that the common meadow $\CCcm(\Int)$ of common cancellation fractions
over \Int\ is a proper homomorphic 
pre-image of $\Rat_\bot$ (Cor.\ref{cor:1}), and is isomorphic 
to the initial common meadow
(Thm.\ref{thm:2}; confer the characterization of the involutive
meadow in~\cite{BR10}).

Then, in Section~\ref{sec:3}, we considered canonical terms and
term rewriting for integers and for some meadows that model expanded versions
of the rational numbers,
and proved the nonexistence of DDRSes (datatype defining rewrite systems)
for the associated canonical term algebras of $\CCcm(\Int)$, $\Rat_0$ and 
$\Rat_\bot$ 
(Thm.\ref{thm:3} and Thm.\ref{thm:4}), each of which is based on a DDRS in which the
integers are represented as  $0$, the positive numerals 
\[1, ~1+1, ~(1+1)+1,...,\]
and the negations thereof.
Moreover, we defined ``rational fracpairs'' that constitute an initial algebra that 
is isomorphic to $\Rat_\bot$ (Thm.\ref{thm:5}).

\medskip

We have the following four conjectures
about the nonexistence of DDRS specifications for rational numbers:
\begin{enumerate}
\item \label{Cone}
The meadow of rationals $\Rat_0$ admits an equational  initial algebra 
specification (see~\cite{BT95} and a 
subsequent simplification in ~\cite{BM11a}). 
Now the conjecture is that no finite equational initial algebra specification 
of $\Rat_0$ is both confluent and strongly terminating (interpreting the equations
as left-to-right rewrite rules). 
This is irrespective of the choice of normal forms. 

Another formulation of this conjecture: $\Rat_0$
cannot be specified by means of a DDRS.

\item \label{Cthree}
We conjecture that for $\Rat_\bot$ the same situation applies as for $\Rat_0$: 
No DDRS for it can be found irrespective of the normal forms one intends the 
DDRS to have.

\item \label{Cfour}
The following conjecture (if true) seems to be simpler to prove: 
$\CCcm(\Int)$ cannot be specified by means of a DDRS.
 
\item The above negative conjectures remain if one allows the DDRS to be modulo 
associativity of $+$ and $\cdot$, commutativity of $+$ and $\cdot$, or both 
associativity and commutativity $+$ and $\cdot$.
\end{enumerate}
Concerning these matters, we should mention the work~\cite{Contejean} of 
Contejean~\emph{et al} in which
normal forms for rational numbers are specified by a complete term rewriting
system modulo
commutativity and associativity of $+$ and~$\cdot$. The associated datatype
$\texttt{Rat}$ comprises two functions
\[\texttt{rat},\slash:\Int\times\Int\to\texttt{Rat},\]
where the symbol $\texttt{rat}$ denotes any fraction, while
$\slash$ denotes irreducible fractions.
Also in this work, division by zero is allowed, ``but such alien terms can be
avoided by introducing a sort for non-null integers'' and is not considered 
any further. The main purpose of this work is to use the resulting datatype
for computing Gr\"obner bases of polynomial ideals over $\Rat$.

\medskip

We conclude with some comments on the use of the word ``fraction'', a term that is sometimes 
used in the semantic sense, 
as in the field of fractions, and sometimes in the syntactic sense, as a fraction having a numerator and a 
denominator. 
For the latter interpretation we introduced the notion of a ``fracpair'' to be used if 
numerator and denominator are viewed as values, and in the case
that we want to refer to the particular syntax of numerator and denominator, 
one can introduce the notion of a \emph{fracterm}, that is, an ``expression of type fracpair''
(thus, not making any identifications that hold in the underlying ring).
Rollnik~\cite{Rollnik} prefers to view fractions a values, over viewing fractions as pairs or 
viewing fractions as terms. He develops a detailed teaching method for fractions based on that viewpoint.
Fracpairs provide an abstraction level in between of both views of fractions.

Finally we comment on a classic requirement on addition of fractions: 
\begin{align}
\label{eq:20b}
\dfrac{x}{y} + \dfrac{z}{y}&= \dfrac{x+ z}{y}.
\end{align}
With the axiom \CC\ and the defining equation for + (see Table~\ref{tab:3})
a proof of this law is immediate:
\[\dfrac{x}{y} + \dfrac{z}{y}= \dfrac{(x\cdot y) + (z \cdot y)}{y \cdot y} =  
\dfrac{(x+z) \cdot y}{y \cdot y} =_\cceq \dfrac{x+z}{y}.\]
Taking $\Int$ as the underlying reduced commutative ring, this relates to the notion 
of \emph{quasi-cardinality} that emerged from educational mathematics and 
is due to Griesel~\cite{Griesel} (see also Padberg~\cite[p.30]{Padberg}). 
The aspect of quasi-cardinality for 
addition of fracpairs, which can also be called the
\emph{quasi-cardinality law}, is expressed by equation~\eqref{eq:20b}.
So we find that the quasi-cardinality law, which features as a central fact in many 
textbooks on elementary arithmetic, follows from the equations for fracpairs, the 
definition of addition on fracpairs, and the \CC-axiom.

\paragraph{Acknowledgement} We thank Kees Middelburg and Jan Willem Klop
for useful comments on an earlier version
of this paper.
In addition, we thank two referees for a number of important suggestions, in particular
concerning Sections~\ref{subsec:2.1} and~\ref{subsec:3.3}.

\appendix
\section{Fracpairs over $\Int/6\Int$ and the structure of $\CCcm(\Int/6\Int)$}
\label{app:0}
In Appendix~\ref{app:01} we discuss cc-equivalence of fracpairs over $\Int/6\Int$, and in
Appendix~\ref{app:02} we
analyze the structure of $\CCcm(\Int/6\Int)$. 

\subsection{Cc-equivalence of fracpairs over $\Int/6\Int$}
\label{app:01}
In this section we investigate which constants can be used to represent all fracpairs
over the reduced commutative ring $\Int/6\Int$ modulo cc-equivalence.
Recall that in $\Int/6\Int$,
\[\text{$-0=0$, $-1=5$, $-2=4$, and $-3=3$,}\] 
and that addition and multiplication are defined by 
\[
\def\arraystretch{1.2}
\begin{array}[t]{c|cccccc}
+&1&2&3&4&5
\\\hline
1
&2&3&4&5&0\\
2
&3&4&5&0&1\\
3
&4&5&0&1&2\\
4
&5&0&1&2&3\\
5
&0&1&2&3&4
\end{array}
\qquad\qquad
\begin{array}[t]{c|cccccc}
\cdot&1&2&3&4&5
\\\hline
1&1&2&3&4&5\\
2&2&4&0&2&4\\
3&3&0&3&0&3\\
4&4&2&0&4&2\\
5&5&4&3&2&1
\end{array}
\]

\renewcommand{\frac}{\dfrac}

From the constants in $\Int/6\Int$ one obtains 36 fracpairs, from which the following twelve 
can be used to represent all fracpairs modulo cc-equivalence:
\begin{equation}
\label{repr}
\begin{array}{l}
\frac10,\quad \frac 01,\frac11,\frac21,\frac31,\frac41,\frac51,\quad 
\frac02, \frac12, \frac22,\quad \frac03, \frac13.
\end{array}
\end{equation}
In Table~\ref{tab:6} we show that the fracpairs listed in~\eqref{repr} represent 
all fracpairs over $\Int/6\Int$ modulo cc-equivalence.
For all $p,q\in\Int/6\Int$ and $r\in\Int/6\Int\setminus\{0\}$, Proposition~\ref{prop:1} implies 
$p/r\ne_\cceq q/0$, and Proposition~\ref{eq00} implies $p/0=_\cceq q/0$. 
Of course, we choose $1/0$ as the representing fracpair for the latter equivalence.

\begin{table}
\centering
\hrule
$
\def\arraystretch{2.8}
\begin{array}{l|l}
\\[-8mm]
\quad
\begin{array}[t]{l}
\frac32=_\cceq \frac{3\cdot2}{2\cdot2}=_\cceq \frac {0\cdot2}{2\cdot 2\cdot2}= \frac02\\
\frac42=  \frac{2\cdot2}{2\cdot2\cdot2}=_\cceq \frac 2{2\cdot 2}=_\cceq \frac 12\\
\frac52=_\cceq \frac{5\cdot 2}{2\cdot 2}=_\cceq  \frac{5\cdot2\cdot2}{2\cdot 2\cdot2}=\frac22\\
\end{array}
\quad&\quad
\begin{array}[t]{l}
\frac 23=  \frac2{3\cdot3}=_\cceq \frac {2\cdot3}{3\cdot 3\cdot3}= \frac 03\\
\frac33=  \frac{3}{3\cdot3}=_\cceq \frac 13\\
\frac43=  \frac{4}{3\cdot3}=_\cceq \frac {4\cdot3}{3\cdot3\cdot3}= \frac 03\\
\frac53=  \frac{5}{3\cdot5\cdot5}=_\cceq \frac 1{3\cdot 5}=\frac13
\\[5mm]
\end{array}
\quad
\\
\hline
\\[-8mm]
\quad
\begin{array}[t]{l}
\frac04= \frac{0\cdot2}{2\cdot2}=_\cceq \frac{0}{2}\\
\frac14= \frac1{2\cdot2}=_\cceq\frac {1\cdot2}{2\cdot2\cdot2}=\frac 22\\
\frac24= \frac{2}{2\cdot2}=_\cceq\frac 12\\
\frac34= \frac{3}{2\cdot2}=_\cceq\frac {3\cdot2}{2\cdot 2\cdot2}=\frac 02\\
\frac44= \frac{2\cdot2}{2\cdot 2}=_\cceq\frac 22\\
\frac54= \frac{5}{4\cdot5\cdot5}=_\cceq\frac 1{4\cdot 5}=\frac 12
\end{array}
\quad&\quad
\begin{array}[t]{l}
\frac05= \frac{0\cdot5}{5\cdot5\cdot5}=_\cceq\frac 0{5\cdot 5}=\frac 01\\
\frac15= \frac{5\cdot5}{5\cdot5\cdot5}=_\cceq\frac 5{5\cdot 5}=\frac 51\\
\frac25= \frac{4\cdot5}{5\cdot5\cdot5}=_\cceq\frac 4{5\cdot 5}=\frac 41\\
\frac35= \frac{3\cdot5}{5\cdot5\cdot5}=_\cceq\frac 3{5\cdot 5}=\frac 31\\
\frac45= \frac{2\cdot5}{5\cdot5\cdot5}=_\cceq\frac 2{5\cdot 5}=\frac 21\\
\frac55= \frac{5}{5\cdot5\cdot5}=_\cceq\frac 1{5\cdot 5}=\frac 11
\\[5mm]
\end{array}
\\
\hline
\\[-8mm]
\quad
\begin{array}[t]{l}
\frac00=_\cceq\frac20=_\cceq\frac30=_\cceq\frac40=_\cceq\frac50=_\cceq\frac10
\end{array}
\\[5mm]
\end{array}
$\hrule
\caption{Equivalences between fracpairs over $\Int/6\Int$ modulo cc-equivalence,
where the righthand sides occur in~\eqref{repr} and where Proposition~\ref{eq00} is repeatedly used}
\label{tab:6}
\end{table}

\medskip

In the following we prove separation modulo cc-equivalence of various fracpairs over $\Int/6\Int$ 
using Proposition~\ref{prop:4}.
There are three choices for a saturated subset $S$ of $\Int/6\Int$ that yield a nontrivial localized ring
$S^{-1}(\Int/6\Int)$. We list the equivalences generated by each of these subsets:
\begin{description}
\item[$S=\{1,5\}:$]~
$(\textbf{\textit k},\mathbf 1)\sim(5k,5)\text{\quad for }k\in\Int/6\Int$
\item[$S=\{1,3,5\}:$]~
\arraycolsep=2pt
$\begin{array}[t]{ll}
(0,1)\sim(2,1)\sim(4,1)\sim\mathbf{(0,3)}\sim(2,3)\sim(4,3)\sim(0,5)\sim(2,5)\sim(4,5)
\\[1mm]
(1,1)\sim(3,1)\sim(5,1)\sim\mathbf{(1,3)}\sim(3,3)\sim(5,3)\sim(1,5)\sim(3,5)\sim(5,5)
\end{array}$
\item[$S=\{1,2,4,5\}:$]~
$\begin{array}[t]{ll}
&\mathbf{(0,1)}\sim\mathbf{(3,1)}\sim\mathbf{(0,2)}\sim(3,2)\sim(0,4)\sim(3,4)\sim(0,5)\sim(3,5)
\\[1mm]
&\mathbf{(1,1)}\sim\mathbf{(4,1)}\sim\mathbf{(2,2)}\sim(5,2)\sim(1,4)\sim(4,4)\sim(2,5)\sim(5,5)
\\[1mm]
&\mathbf{(2,1)}\sim\mathbf{(5,1)}\sim\mathbf{(1,2)}\sim(4,2)\sim(2,4)\sim(5,4)\sim(1,5)\sim(4,5)
\end{array}$
\end{description}
The proofs of these equivalences are trivial but cumbersome.
With respect to fracpairs over $\Int/6\Int$, the following can be concluded:
\begin{enumerate}
\item 
The case $S=\{1,5\}$ implies
that $k/1\ne_\cceq\ell/1$ if $k\ne\ell$, and that fracpairs of the 
form $x/5$ need not be considered. So this case yields six fracpairs that are distinct 
modulo cc-equivalence.
\item  
The case $S=\{1,3,5\}$ introduces six fracpairs of the form $x/3$,
and implies that $0/3$ and $1/3$ are distinct, that $0/3$ is distinct from
$1/1$, $3/1$ and $5/1$, and $1/3$ from $0/1$, $2/1$ and $4/1$.
Furthermore, the identities in Table~\ref{tab:6} imply
that $0/3$ and $1/3$ represent  modulo cc-equivalence all fracpairs of the form $x/3$.
\item
The case  $S=\{1,2,4,5\}$ introduces twelve fracpairs of the form $x/2$ or $x/4$,
and the identities in Table~\ref{tab:6} imply that $0/2$ and $1/2$ and $2/2$ represent all fracpairs 
of this form.
Furthermore, this case implies that $0/2$ and $1/2$ and $2/2$ are mutually distinct modulo cc-equivalence.
\end{enumerate}
We note that e.g.\ $1/1$ and $2/2$ can not be distinguished in this way. Separation of $1/1$ and $2/2$ 
in $\CCcm(\Int/6\Int)$
can however be proven easily, as we show in Appendix~\ref{app:02}, and separations of the remaining fracpairs 
from~\eqref{repr} that do not follow from the conclusions above can be proven in a similar fashion. 

\subsection{The structure of $\CCcm(\Int/6\Int)$}
\label{app:02}

By Theorem~\ref{thm:1}, $\CCcm(\Int/6\Int)$ is a common meadow.
Multiplication in $\CCcm(\Int/6\Int)$ is defined in Table~\ref{tab:mul}, 
where we leave out 
$1/0=\bot$ (recall $\Mda\vdash x\cdot \bot = \bot$). 

\begin{table}
\centering
$
\arraycolsep=11.8pt\def\arraystretch{2.6}
\begin{array}{c|cccccc|ccc|cc}
x\cdot y
&\frac01&\frac11&\frac21&\frac31&\frac41&\frac51
&\frac02&\frac12&\frac22&\frac03&\frac13
\\
\hline
\frac01
&\frac01&\frac01&\frac01&\frac01&\frac01&\frac01
&\frac02&\frac02&\frac02
&\frac03&\frac03
\\
\frac11
&\frac01&\frac11&\frac21&\frac31&\frac41&\frac51
&\frac02&\frac12&\frac22&\frac03&\frac13
\\
\frac21
&\frac01&\frac21&\frac41&\frac01&\frac21&\frac41
&\frac02&\frac22&\frac12
&\frac03&\frac03
\\
\frac31
&\frac01&\frac31&\frac01&\frac31&\frac01&\frac31
&\frac02&\frac02&\frac02
&\frac03&\frac13
\\
\frac41
&\frac01&\frac41&\frac21&\frac01&\frac41&\frac21
&\frac02&\frac12&\frac22
&\frac03&\frac03
\\
\frac51
&\frac01&\frac51&\frac41&\frac31&\frac21&\frac11
&\frac02&\frac22&\frac12
&\frac03&\frac13
\\
\hline
\frac02
&\frac02&\frac02&\frac02&\frac02&\frac02&\frac02
&\frac02&\frac02&\frac02
&\bot&\bot
\\
\frac12
&\frac02&\frac12&\frac22&\frac02&\frac12&\frac22
&\frac02&\frac22&\frac12
&\bot&\bot
\\
\frac22
&\frac02&\frac22&\frac12&\frac02&\frac22&\frac12
&\frac02&\frac12&\frac22
&\bot&\bot
\\
\hline
\frac03
&\frac03&\frac03&\frac03&\frac03&\frac03&\frac03
&\bot&\bot&\bot
&\frac03&\frac03
\\
\frac13
&\frac03&\frac13&\frac03&\frac13&\frac03&\frac13
&\bot&\bot&\bot
&\frac03&\frac13
\\[3mm]
\end{array}
$
\caption{Multiplication of fracpairs in $\CCcm(\Int/6\Int)$}
\label{tab:mul}
\end{table}

Separation of $1/1$ and $2/2$ in $\CCcm(\Int/6\Int)$
now follows easily: 
if $1/1=_\cceq2/2$ then 
\\
$\CCcm(\Int/6\Int)\models 1/2=2/2\cdot 2/1=1/1\cdot2/1$
and $\CCcm(\Int/6\Int)\models 1/2=2/2\cdot5/1=1/1\cdot5/1$, hence $2/1=_\cceq5/1$, which contradicts 
their separation mentioned in Appendix~\ref{app:01}. 

\newpage 

Addition in $\CCcm(\Int/6\Int)$ is defined in the following table, 
where we leave out $0/1$ (the zero for $+$) and
$1/0=\bot$ (recall $\Mda\vdash x+\bot=\bot$):
\[
\arraycolsep=12pt\def\arraystretch{2.5}
\begin{array}{c|ccccc|ccc|cc}
x+y
&\frac11&\frac21&\frac31&\frac41&\frac51
&\frac02&\frac12&\frac22&\frac03&\frac13
\\
\hline
\frac11
&\frac21&\frac31&\frac41&\frac51&\frac01
&\frac22&\frac02&\frac12
&\frac13&\frac03
\\
\frac21
&\frac31&\frac41&\frac51&\frac01&\frac11
&\frac12&\frac22&\frac02
&\frac03&\frac13
\\
\frac31
&\frac41&\frac51&\frac01&\frac11&\frac21
&\frac02&\frac12&\frac22
&\frac13&\frac03
\\
\frac41
&\frac51&\frac01&\frac11&\frac21&\frac31
&\frac22&\frac02&\frac12
&\frac03&\frac13
\\
\frac51
&\frac01&\frac11&\frac21&\frac31&\frac41
&\frac12&\frac22&\frac02
&\frac13&\frac03
\\
\hline
\frac02
&\frac22&\frac12&\frac02&\frac22&\frac12
&\frac02&\frac12&\frac22
&\bot&\bot
\\
\frac12
&\frac02&\frac22&\frac12&\frac02&\frac22
&\frac12&\frac22&\frac02
&\bot&\bot
\\
\frac22
&\frac12&\frac02&\frac22&\frac12&\frac02
&\frac22&\frac02&\frac12
&\bot&\bot
\\
\hline
\frac03
&\frac13&\frac03&\frac13&\frac03&\frac13
&\bot&\bot&\bot
&\frac03&\frac13
\\
\frac13
&\frac03&\frac13&\frac03&\frac13&\frac03
&\bot&\bot&\bot
&\frac13&\frac03
\end{array}
\]
Separation of $0/1$ and $0/2$ in $\CCcm(\Int/6\Int)$ can be shown using the table above: 
if $0/1=_\cceq 0/2$ then $\CCcm(\Int/6\Int)\models 1/1=0/1+ 1/1=0/2+1/1=2/2$, 
which contradicts their separation shown above.

\medskip

Finally, we provide a table for both additive and multiplicative inverse:
\[
\arraycolsep=10pt\def\arraystretch{2.5}
\begin{array}{l}
\begin{array}{c|c|cccccc|ccc|cc}
x
&\bot&\frac01&\frac11&\frac21&\frac31&\frac41&\frac51
&\frac02&\frac12&\frac22&\frac03&\frac13
\\
\hline
-x
&\bot&\frac01&\frac51&\frac41&\frac31&\frac21&\frac11
&\frac02&\frac22&\frac12
&\frac03&\frac13
\\[0mm]
x^{-1}
&\bot&\bot&\frac11&\frac12&\frac13&\frac22&\frac51
&\bot&\frac12&\frac22
&\bot&\frac13
\end{array}
\end{array}
\]
Note that in the particular case of $\CCcm(\Int/6\Int)$, the equation $(x^{-1})^{-1}=x^{-1}$ is valid. 

\newpage

\section{Some identities for common meadows}
\label{app:1}

\begin{enumerate}
\item $\Mda\vdash -(x\cdot y)=x\cdot (-y)$.

\begin{proof}
First, $\Mda\vdash 0\cdot x=0\cdot (-x)$ because $0\cdot x=x+-x=-x+-(-x)=0\cdot (-x)$. Hence
\begin{align*}
-(x\cdot y)&=-(x\cdot y)+0\cdot -(x\cdot y)
\\
&=-(x\cdot y)+0\cdot x\cdot y
\\
&=-(x\cdot y)+x\cdot(y+(-y))
\\
&=-(x\cdot y)+x\cdot y+x\cdot(-y)
\\
&=0\cdot(x\cdot y)+x\cdot(-y)
\\
&=(x+0\cdot x)\cdot(-y)
\\
&=x\cdot(-y).
\end{align*}
\end{proof}
\item
$\Mda\vdash x\cdot y^{-1}+u\cdot v^{-1}=(x\cdot v + u\cdot y)\cdot (y\cdot v)^{-1}$. 

\begin{proof}
First, $\Mda\vdash 0\cdot (x\cdot y)=0\cdot (x+y)$ because
\begin{align*}
0\cdot (x+y)&=0\cdot (x+y)\cdot(x+y)\\
&=0\cdot x+0\cdot x\cdot y +0\cdot y\cdot x+0\cdot y\\
&=(0+ 0\cdot y)\cdot x+(0+0\cdot x)\cdot y\\
&=0\cdot x\cdot y+0\cdot x\cdot y\\
&=0\cdot(x\cdot y),
\end{align*}
and thus
\begin{align}
\nonumber
x \cdot y \cdot y^{-1}
&=x\cdot(1+0\cdot y^{-1})
\\
\nonumber
&=x+0\cdot x\cdot y^{-1}\\
\nonumber
&=x+0\cdot x+0\cdot y^{-1}
\\
\label{eq:cv}
&=x+0\cdot y^{-1}.
\end{align}
Hence,
\begin{align*}
(x \cdot v + u \cdot y) \cdot (y \cdot v)^{-1}&=
x \cdot y^{-1} \cdot v \cdot v^{-1}+
u \cdot v^{-1} \cdot y \cdot y^{-1}\\
&=(x \cdot y^{-1}+0\cdot v^{-1})+
(u \cdot v^{-1} +0 \cdot y^{-1})
&&\text{by \eqref{eq:cv}}\\
&=(x \cdot y^{-1}+0\cdot y^{-1})+
(u \cdot v^{-1} +0 \cdot v^{-1})\\
&=x \cdot y^{-1}+u \cdot v^{-1}.
\end{align*}
\end{proof}
\item
$\Mda\vdash x\cdot (x^{-1}\cdot x^{-1})=x^{-1}$.

\begin{proof}
$x\cdot x^{-1}\cdot x^{-1}=
(1+0\cdot x^{-1})\cdot x^{-1}
=x^{-1}+0\cdot x^{-1}
=x^{-1}$.
\end{proof}
\end{enumerate}

\begin{thebibliography}{58}

\bibitem{BP85}
Bachmair, L. and Plaisted,  D.A. (1985).
Termination orderings for associative commutative rewriting systems.
\newblock \emph{Journal of Symbolic Computation}, (1):329--349.

\bibitem{BB15}
Bergstra, J.A. and Bethke, I. (2015).
Subvarieties of the variety of meadows.
\newblock \texttt{arXiv:1510.04021v2} [math.RA], 22 October 2015.

\bibitem{BBP13}
Bergstra, J.A., Bethke, I., and Ponse, A. (2013).
Cancellation meadows: a generic basis theorem and some applications.
\newblock \emph{The Computer Journal}, 56(1):3--14, 
\texttt{doi:10.1093/comjnl/bsx147}.

\bibitem{BM11a}
Bergstra, J.A. and Middelburg, C.A. (2011).
Inversive meadows and divisive meadows.
\newblock \emph{Journal of Applied Logic}, 9(3):203--220.
DOI: 10.1016/j.jal/2011.03.001. 
Also available as \texttt{arXiv:0907.0540v3} [math.RA], 2 November 2010.

\bibitem{BP14}
Bergstra, J.A. and Ponse, A. (2015).
Division by zero in common meadows.
\newblock In R. de Nicola and R. Hennicker (editors), 
\emph{Software, Services, and Systems} (Wirsing Festschrift), 
LNCS 8950, pages 46--61, Springer.
\newblock Also available as \texttt{arXiv:1406.6878v2} [math.RA], Dec 2014, and
Report TCS1410v2, University of Amsterdam, 
\url{https://ivi.fnwi.uva.nl/tcs/publications.php#reports}.

\bibitem{BP14a}
Bergstra, J.A. and Ponse, A. (2014).
Three datatype defining rewrite systems for datatypes of Integers each extending 
a datatype of Naturals.
\newblock \texttt{arXiv:1406.3280v2} [math.RA], 21 August 2014.
Also available as Report TCS1409v2, University of Amsterdam, 
\url{https://ivi.fnwi.uva.nl/tcs/publications.php#reports}.

\bibitem{BT95}
Bergstra, J.A. and Tucker, J.V. (1995).
Equational specifications, complete term rewriting systems, and 
computable and semicomputable algebras. 
\newblock \emph{Journal of the ACM}, 42(6):1194--1230.

\bibitem{BT07}
Bergstra, J.A. and Tucker, J.V. (2007).
The rational numbers as an abstract data type.
\newblock {\em Journal of the ACM}, 54(2), Article 7.

\bibitem{BR10}
Bethke, I and  Rodenburg, P.H. (2010). 
The initial meadows. 
\newblock \emph{The Journal of Symbolic Logic}, (75):888--895. 
DOI: 10.2178/jsl/1278682205. 

\bibitem{Bourbaki}
Bourbaki, N.  
\footnote{Bourbaki group, officially known as the 
 \emph{Association des collaborateurs de Nicolas Bourbaki.}}
(1990).
\newblock \emph{Algebra II}, Chapter V, \S 7. Series: Elements of Mathematics,
Springer-Verlag.
(English translation of: \emph{Alg\`ebre}, chapitres 4 \`a 7. Masson, Paris 1981.)

\bibitem{Contejean}
Contejean, E., March\'e, C., and	Rabehasaina, L. (1997).
Rewrite systems for natural, integral, and rational arithmetic.
In: Hubert Comon (ed.),
\emph{RTA '97, Proceedings of the 8th International Conference on Rewriting 
Techniques and Applications}, pages 98--112, Springer-Verlag.

\bibitem{Griesel}
Griesel, H. (1981).
Der quasikardinale Aspekt in der Bruchrechnung.
\newblock \emph{Der Mathematikunterricht}, 27(4):87--95.

\bibitem{Lam}
Lam, T.Y. (2001).
\emph{A First Course in Noncommutative Rings} (Second Edition).
\newblock Series: Graduate Texts in Mathematics. Springer.

\bibitem{Matsumura}
Matsumura, H. (translated by M. Reid) (1989).
\emph{Commutative Ring Theory}.
Cambridge University Press.

\bibitem{Padberg}
Padberg, F. (2012).
\emph{Didaktik der Bruchrechnung} (Fourth Edition).
\newblock Series: Mathematik, Primar- und Sekundarstufe, Springer-Spektrum.

\bibitem{Rollnik}
Rollnik, S. (2009).
Das pragmatische Konzept f\"ur den Bruchrechenunterricht. PhD
thesis, University of Flensburg, Germany.

\end{thebibliography}
\end{document}